\def\l{\left}
\def\r{\right}
\def\CC{{\mathbb{C}}}
\def\RR{{\mathbb{R}}}
\def\NN{{\mathbb{N}}}
\def\OO{{\cal{O}}}
\def\R#1{$(\ref{#1})$}
\newcommand{\bb}[1]{\begin{equation}\label{#1}}
\newcommand{\ee}{\end{equation}}
\newcommand{\bbb}{\begin{eqnarray}}
\newcommand{\eee}{\end{eqnarray}}
\newcommand{\bbbb}{\begin{eqnarray*}}
\newcommand{\eeee}{\end{eqnarray*}}
\newcommand{\nnn}{\nonumber}
\newcommand{\no}{\noindent}
\newtheorem{thm}{Theorem}
\newtheorem{lemma}{Lemma}
\theoremstyle{remark}
\newtheorem{rem}{Remark}
\theoremstyle{definition}
\newtheorem{define}{Definition}
\newcommand{\clearallnum}{
    \numberwithin{equation}{section} \setcounter{equation}{0}
    \numberwithin{thm}{section} \setcounter{thm}{0}
    \numberwithin{lemma}{section} \setcounter{lemma}{0}
    \numberwithin{cor}{section} \setcounter{cor}{0}
    \numberwithin{rem}{section} \setcounter{rem}{0}
    \numberwithin{define}{section} \setcounter{define}{0}}
\journal{}
\begin{document}

\begin{frontmatter}

%% Title, authors and addresses

%% use the tnoteref command within \title for footnotes;
%% use the tnotetext command for theassociated footnote;
%% use the fnref command within \author or \address for footnotes;
%% use the fntext command for theassociated footnote;
%% use the corref command within \author for corresponding author footnotes;
%% use the cortext command for theassociated footnote;
%% use the ead command for the email address,
%% and the form \ead[url] for the home page:
%% \title{Title\tnoteref{label1}}
%% \tnotetext[label1]{}
%% \author{Name\corref{cor1}\fnref{label2}}
%% \ead{email address}
%% \ead[url]{home page}
%% \fntext[label2]{}
%% \cortext[cor1]{}
%% \address{Address\fnref{label3}}
%% \fntext[label3]{}

\title{The quenching of solutions to time-space fractional Kawarada problems}

%% use optional labels to link authors explicitly to addresses:
%% \author[label1,label2]{}
%% \address[label1]{}
%% \address[label2]{}

\author{Joshua L. Padgett}
\ead{joshua.padgett@ttu.edu}
%\fntext[label]{Corresponding Author}
%\author{Qin Sheng}
%\ead{qin\_ sheng@baylor.edu}

\address{Texas Tech University, Department of Mathematics and Statistics, Broadway and Boston, Lubbock, TX 79409-1042}

\begin{abstract}
Quenching solutions to a Kawarada problem with a Caputo time-fractional derivative and a fractional Laplacian are considered. The solutions to such problems may only exist locally in time when quenching occurs. Quenching and non-quenching solutions are shown to remain positive and be monotonically increasing in time under minor restrictions. Conditions for quenching to occur are demonstrated and shown to depend on the domain size.
\end{abstract}

\begin{keyword}
%% keywords here, in the form: keyword \sep keyword

%% PACS codes here, in the form: \PACS code \sep code

%% MSC codes here, in the form: \MSC code \sep code
%% or \MSC[2008] code \sep code (2000 is the default)

Kawarada problem \sep quenching solution \sep Caputo derivative \sep fractional Laplacian \sep local existence and uniqueness \sep positivity and monotonicity

\end{keyword}

\end{frontmatter}

%% \linenumbers

%% main text
\section{Introduction} \clearallnum

The purpose of this paper is to investigate the quenching phenomenon of a time-space fractional semilinear equation. Let $\Omega$ be an open bounded domain in $\RR^d$ with smooth boundary $\partial\Omega.$ We then define $Q_T \mathrel{\mathop:}= \Omega\times (0,T)$ and the parabolic boundary $\Gamma_T = \partial\Omega\times (0,T).$ Consider the following nonlocal Kawarada problem:
\bb{b1}
\l\{\begin{aligned}
\partial_t^\alpha u & = -(-\Delta)^s u + f(u), & (x,t) &\in Q_t,\\
u & = 0, & (x,t) &\in \Gamma_T,\\
u(x,0) & = u_0(x), & x &\in \Omega,
\end{aligned}\r.
\ee
where $\partial_t^\alpha$ denotes the Caputo time-fractional derivative of order $\alpha\in(0,1),$ and $(-\Delta)^s$ is the fractional Laplacian with $s\in(0,1),$ and the continuous initial data $u_0\,:\,\Omega\to \RR^+$ is such that $0 \le u_0 \ll c.$ The nonlinear reaction term $f\,:\,B_\rho\to \RR^+,$ where $0<\rho< c$ and $B_\rho\mathrel{\mathop:}=\{u\in L^\infty(\Omega)\,:\, \|u\|_\infty < \rho\},$ is a given continuous, convex function satisfying a local Lipschitz condition on $B_\rho.$ That is, for $u,v\in B_\rho$  there exists a continuous function $L_f(\cdot)\,:\,\RR^+\to\RR^+$ such that
\bb{lip}
\|f(u)-f(v)\|_{\mathbb{H}^s(\Omega)}\le L_f(c)\|u-v\|_{\mathbb{H}^s(\Omega)}.
\ee
The norm $\|\cdot\|_{\mathbb{H}^s(\Omega)}$ will be defined in the following section. We further assume that $f$ is a monotonically increasing function on $B_\rho$ and 
\bb{lim}
\lim_{u\to c^-}f(u) = +\infty.
\ee

When $\alpha = s = 1,$ \R{b1} reduces to the following local semilinear problem
\bb{bb1}
\l\{\begin{aligned}
\partial_t u & = -(-\Delta)^s u + f(u), & (x,t) &\in Q_t,\\
u & = 0, & (x,t) &\in \Gamma_T,\\
u(x,0) & = u_0(x), & x &\in \Omega,
\end{aligned}\r.
\ee
which was originally studied by Kawarada \cite{Kawa}. This local problem has been well-studied due to the fact that it models several idealized physical phenomena, including solid-fuel combustion and microelectromechanical systems (MEMS) \cite{Josh1,Kavallaris2008,chan1995thermal,Levine,Josh3,Padgett4,kirk2002quenching}.  For \R{bb1}, it is known that under certain conditions monotonically increasing solutions to the problem may only exist locally \cite{Padgett4,Josh3,Levine2,acker1976quenching}. Further, it is known that for a given function $f,$ the existence of global solutions to \R{bb1} depends on the spatial domain {\em size} and {\em shape} \cite{Chan1,Tian,Padgett4}. We say that two $d-$dimensional spatial domains $\Omega_1$ and $\Omega_2$ have the same shape if there exists $y\in\Omega_1\cap\Omega_2$ and a constant $\lambda>0$ such that
\bb{shape}
\Omega_2 = \{z\,:\,z = y+\lambda(x-y),\ \mbox{for}\ x\in\Omega_1\}.
\ee
Thus, for a {\em fixed domain shape}, determining whether global solutions to \R{bb1} exists, reduces to studying the following steady-state problem
\bb{bbb1}
\l\{\begin{aligned}
\Delta u + \lambda^2f(u) & = 0, & x &\in\Omega,\\
u & = 0, & x &\in\partial\Omega.\\
\end{aligned}\r.
\ee
The existence of a unique positive solution to \R{bbb1} depends on the value of $\lambda,$ and thus, there is a {\em critical domain size} that determines whether the classical Kawarada problems emits a global solution \cite{Chan1}. That is, there is a $\lambda_*>0$ associated to $\Omega$ such that if $0<\lambda<\lambda_*,$ then the solution exists globally, and if $\lambda_*<\lambda<\infty,$ then there exists a time $T_*<\infty$ such that the maximal interval of existence for the solution is $[0,T_*).$ In this latter case, the solution is said to {\em quench in finite time}. For the case when $\lambda = \lambda_*,$ we say that the solution {\em quenches in infinite time}, as $T_* = \infty.$

The purpose of this current study is to extend some of the existing results for \R{bb1} to the nonlocal problem \R{b1}. We note that this extension is not simply an interesting mathematical problem, but is motivated by numerous physical applications. That is, there have been numerous recent works which outline the importance of fractional and nonlocal models in the accurate modeling of multiphysics problems exhibiting anomalous diffusion \cite{chen2010anomalous,hilfer2000applications,podlubny1998fractional,sabatier2007advances}. In particular, solid-fuel combustion has been shown to behave in a nonlocal manner, thus necessitating the need for considerations of such mathematical models \cite{Pagnini2011}.

In order to study the problem \R{b1}, we introduce the following definition of quenching in the nonlocal setting.

\begin{define}
A solution $u$ of \R{b1} is said to {\em quench in finite time} if there exists $T_*<\infty$ such that 
\bb{a1}
\max\l\{\|u(x,t)\|_\infty\,:\,x\in \overline{\Omega}\r\}\rightarrow c^{-}~\mbox{as}~t\rightarrow %T_q^{-}
T_*^{-}.
\ee
If \R{a1} holds for $T_*=\infty,$ then $u$ is said to {\em quench in infinite time}. $T_*$ is referred to as the {\em quenching time\/}. The set $\Omega_c\subseteq \Omega$ containing all quenching points is called the {\em quenching set}.
\end{define}

The paper is organized as follows. In the following section we introduce some important mathematical preliminaries, which are vital to the current study. In Section 3 we consider properties of the operators which generate the solution to \R{b1}. In Sections 4 and 5 we determine conditions under which there exists unique continuous solutions to \R{b1} that are both positive and monotonically increasing on the domain of existence. Section 6 is concerned with establishing conditions under which quenching occurs. Section 7 provides some computational experiments to validate the results and provide further insight into the quenching phenomenon. Finally, Section 8 provides concluding remarks regarding the current work.

\section{Mathematical Preliminaries} \clearallnum

We now introduce some basic facts and definitions from fractional calculus. In the following, we let $I\mathrel{\mathop:}=(0,T)$ and $\Gamma(\cdot)$ be Euler's gamma function. Further, for $\alpha> 0$ we define the following function
\bb{gk}
g_\alpha(t) = \l\lbrace\begin{array}{ll}
t^{\alpha-1}/\Gamma(\alpha), & t>0,\\
0, & t\le 0,
\end{array}\r.
\ee
with $g_0(t)\equiv 0.$

\begin{define}
Let $v\in L^1(I)$ and $\alpha\ge 0.$ The Riemann-Liouville fractional integral of order $\alpha$ of $v$ is defined as
$$J^\alpha_t v(t) \mathrel{\mathop:}= (g_\alpha * v)(t) = \int_{0}^t g_\alpha(t-s)u(s)\,ds,\quad t>0,$$
where $J^0_tv(t) = v(t).$
\end{define}

\begin{define}
Let $v\in C^{m-1}(I)$ and $g_{m-\alpha}*v\in W^{m,1}(I)$ where $m\in\NN$ and $0\le m-1<\alpha\le m.$ The Riemann-Liouville fractional derivative of order $\alpha$ of $v$ is defined by 
$${_RD_t^\alpha}v(t) \mathrel{\mathop:}= D_t^m(g_{m-\alpha}*v)(t) = D_t^m J_t^{m-\alpha}v(t),\quad t>0,$$ 
where $D_t^m \mathrel{\mathop:}= d^m/dt^m.$
\end{define}

\begin{define}
Let $v\in C^{m-1}(I)$ and $g_{m-\alpha}*v\in W^{m,1}(I)$ where $m\in\NN$ and $0\le m-1<\alpha\le m.$ The regularized Caputo fractional derivative of order $\alpha$ of $v$ is defined as
\bb{caputo}
\partial_t^\alpha v(t) \mathrel{\mathop:}= D_t^m J_t^{m-\alpha} \l(v(t) - \sum_{i=1}^{m-1}f^{(i)}(0)g_{i+1}(t)\r),\quad t>0,
\ee
where $D_t^m \mathrel{\mathop:}= d^m/dt^m.$ If $v$ is continuously differentiable with respect to $t,$ then $\partial_t^\alpha \to d^m/dt^m$ as $\alpha \to m.$
\end{define}

We note that for $\alpha\in (0,1),$ if $v$ is smooth enough, the Caputo fractional derivative can be written as
$$\partial_t^\alpha v(t) = \frac{1}{\Gamma(1-\alpha)}\int_0^t (t-s)^{-\alpha}v'(s)\,ds.$$

We now summarize some useful properties from fractional calculus in the following lemma \cite{podlubny1998fractional,sabatier2007advances,WANG2012202}.

\begin{lemma}
Let $\alpha,\beta>0.$ Then the following properties hold.
\begin{itemize}
\item[i.] $J_t^\alpha J_t^\beta v = J_t^{\alpha+\beta}v$ for all $v\in L^1(I).$ That is to say, the Riemann-Liouville fractional integral has the semigroup property;
\item[ii.] The Caputo fractional derivative is a left inverse of the Riemann-Liouville fractional integral:
$$\partial_t^\alpha J_t^\alpha v = v,\quad\mbox{for all}\ v\in L^1(I),$$
but in general is not a right inverse. In fact, for all $f\in C^{m-1}(I)$ with $g_{m-\alpha}*v\in W^{m,1}(I)$ where $m\in\NN$ and $0\le m-1<\alpha\le m,$ we have
$$J_t^\alpha \partial_t^\alpha v(t) = v(t) - \sum_{i=0}^{m-1}f^{(i)}(0)g_{i+1}(t).$$
\end{itemize}
\end{lemma}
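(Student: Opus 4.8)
The plan is to establish (i) by a direct convolution computation and then obtain both halves of (ii) as consequences of (i) together with the classical fundamental theorem of calculus. For (i), I would exploit that $J_t^\alpha v = g_\alpha * v$ is a convolution and that convolution is associative, so that
$$J_t^\alpha J_t^\beta v = g_\alpha * (g_\beta * v) = (g_\alpha * g_\beta) * v;$$
the interchange in the order of integration is legitimate for $v\in L^1(I)$ because $g_\alpha,g_\beta$ are nonnegative and locally integrable, so Tonelli's theorem applies. The problem thus reduces to the single kernel identity $g_\alpha * g_\beta = g_{\alpha+\beta}$. I would prove this via the substitution $s = t\tau$, which turns the convolution integral into a Beta integral:
$$(g_\alpha * g_\beta)(t) = \frac{t^{\alpha+\beta-1}}{\Gamma(\alpha)\Gamma(\beta)}\int_0^1 (1-\tau)^{\alpha-1}\tau^{\beta-1}\,d\tau = \frac{t^{\alpha+\beta-1}}{\Gamma(\alpha+\beta)} = g_{\alpha+\beta}(t),$$
using $\mathrm{B}(\beta,\alpha) = \Gamma(\alpha)\Gamma(\beta)/\Gamma(\alpha+\beta)$, an integral that converges precisely because $\alpha,\beta>0$.

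For the left-inverse assertion in (ii), the key observation is that the regularizing polynomial in the Caputo definition contributes nothing when applied to $J_t^\alpha v$. Indeed, for $v\in L^1(I)$ and $0\le i\le m-1 < \alpha$, part (i) and $D_t^i J_t^i = \mathrm{id}$ give $D_t^i J_t^\alpha v = J_t^{\alpha-i}v$, which tends to $0$ as $t\to 0^+$ because $\alpha - i > 0$; hence every initial value $(J_t^\alpha v)^{(i)}(0)$ appearing in the definition vanishes. Consequently
$$\partial_t^\alpha J_t^\alpha v = D_t^m J_t^{m-\alpha} J_t^\alpha v = D_t^m J_t^m v = v,$$
where the middle equality is again (i) and the last is the Lebesgue differentiation theorem applied to the absolutely continuous function $J_t^m v$.

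For the remaining identity I would first record the equivalent representation $\partial_t^\alpha v = J_t^{m-\alpha}D_t^m v$ of the regularized Caputo derivative for admissible $v$, and then apply (i) once more:
$$J_t^\alpha \partial_t^\alpha v = J_t^\alpha J_t^{m-\alpha} D_t^m v = J_t^m D_t^m v = v - \sum_{i=0}^{m-1} v^{(i)}(0)\,g_{i+1}(t),$$
the last step being Taylor's theorem with integral remainder, i.e.\ the $m$-fold iteration of the classical fundamental theorem of calculus (recall $g_{i+1}(t) = t^i/\Gamma(i+1) = t^i/i!$). The appearance of the $i=0$ term $v(0)g_1(t) = v(0)$, which is not removed in forming $\partial_t^\alpha$, is exactly what prevents $J_t^\alpha$ from being a right inverse of $\partial_t^\alpha$.

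The main obstacle is not the algebra but the regularity bookkeeping. I would need to pin down the admissibility class $v\in C^{m-1}(I)$ with $g_{m-\alpha}*v\in W^{m,1}(I)$ carefully enough to (a) justify the equivalence between the regularized definition and $J_t^{m-\alpha}D_t^m v$, (b) make sense of the pointwise initial data $v^{(i)}(0)$, and (c) legitimize differentiating the $L^1$ convolutions almost everywhere. I would also reconcile the notation in the definition (the derivatives written $f^{(i)}$ should read $v^{(i)}$) and confirm precisely which initial-value terms are subtracted, since this choice fixes the exact form of the remainder in (ii). Once these points are settled, part (i) carries all of the genuine analytic content and the rest of the lemma follows from the classical fundamental theorem of calculus.
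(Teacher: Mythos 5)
First, a point of comparison: the paper does not prove this lemma at all; the statement is imported from the fractional-calculus literature, with the three citations preceding it standing in for a proof. So your proposal must be judged purely on its own correctness. Part (i) is correct and is the standard argument: reduce to the kernel identity $g_\alpha * g_\beta = g_{\alpha+\beta}$ via associativity of convolution (Tonelli handles the interchange for $v\in L^1$), then evaluate the Beta integral. Your derivation of the right-inverse formula from the representation $\partial_t^\alpha v = J_t^{m-\alpha}D_t^m v$, part (i), and Taylor's theorem with integral remainder is also the standard route, with regularity caveats that you flag yourself.

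The genuine gap is in the left-inverse argument. Your key claim is that for $v\in L^1(I)$ the relevant initial values vanish because $J_t^{\alpha-i}v(t)\to 0$ as $t\to 0^+$ whenever $\alpha-i>0$. That is true when $\alpha-i\ge 1$ (by absolute continuity of the Lebesgue integral) and whenever $v$ is bounded (since $|J_t^{\alpha-i}v(t)|\le \|v\|_\infty\, t^{\alpha-i}/\Gamma(\alpha-i+1)$), but it is false for general $v\in L^1$ in the range $0<\alpha-i<1$, which is exactly the case $i=m-1$ when $\alpha$ is not an integer. Concretely, take $m=1$, $\alpha=1/2$, and $v(t)=t^{-1/2}\in L^1(I)$: a Beta-integral computation gives $J_t^{1/2}v\equiv\sqrt{\pi}$, a nonzero constant, so $(J_t^{1/2}v)(0)=\sqrt{\pi}\ne 0$, and the regularized Caputo derivative of $J_t^{1/2}v$ is $D_tJ_t^{1/2}\left(\sqrt{\pi}-\sqrt{\pi}\right)=0\ne v$. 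So not only does your argument break down; the left-inverse identity itself, read for the regularized Caputo derivative on all of $L^1(I)$, is false. It is the Riemann--Liouville derivative for which $D_t^mJ_t^{m-\alpha}J_t^\alpha v=D_t^mJ_t^m v=v$ holds a.e.\ for every $v\in L^1(I)$ (this is your computation, with no initial-value terms to worry about), while the Caputo version requires $v$ to be bounded (e.g.\ continuous) near $t=0$, in which case your vanishing argument is valid. This also settles the definitional ambiguity you noticed: the $L^1$ left-inverse claim and the displayed Taylor-remainder formula cannot both hold for the same operator, so the repair is to prove the left inverse either for the Riemann--Liouville derivative or under a boundedness hypothesis on $v$, and to reserve the right-inverse formula for the regularized Caputo derivative.
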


In order to appropriately introduce the fractional Laplacian considered in this paper, we first define some fractional Sobolev spaces of particular importance. For $\Omega\subseteq\RR^d$ with smooth boundary, $\partial\Omega,$ and $s\in(0,1),$ we define the space $H^s(\Omega)$ as
$$H^s(\Omega)\mathrel{\mathop:}= \l\{w\in L^2(\Omega)\,:\,\int_{\Omega}\int_{\Omega}\frac{|w(x)-w(y)|^2}{|x-y|^{d+2s}}\,dy\,dx<\infty\r\},$$
where $x,y\in\Omega,$ and $|x-y|^2\mathrel{\mathop:}=\textstyle\sum_{i=1}^d (x_i-y_i)^2.$
If we define $|\cdot|_{H^s(\Omega)}$ to be the seminorm given by
$$|w|_{H^s(\Omega)}^2 \mathrel{\mathop:}= \int_\Omega\int_\Omega\frac{|w(x)-w(y)|^2}{|x-y|^{d+2s}}\,dy\,dx,$$
then it follows that $H^s(\Omega)$ is a Hilbert space with norm $\|\cdot\|_{H^s(\Omega)}^2 = \|\cdot\|_{L^2(\Omega)}^2 + |\cdot|_{H^s(\Omega)}^2$ \cite{servadei2014spectrum,DINEZZA2012521}. We define the space $H_0^s(\Omega)$ to be the closure of $C_0^\infty(\Omega)$ with respect to the norm $\|\cdot\|_{H^s(\Omega)}.$ Since $\partial\Omega$ is smooth, it follows that we can define the aforementioned fractional Sobolev spaces via interpolation spaces of index $\theta = 1-s$ \cite{DINEZZA2012521}. That is, for $s\in(0,1),$ we have
\bb{interp}
H^s(\Omega) = [H^1(\Omega),L^2(\Omega)]_\theta;
\ee
{\em i.e.}, $H^s(\Omega)$ is the intermediary Banach space between $L^2(\Omega)$ and $H^1(\Omega).$ Further, we can define the spaces $H_0^s(\Omega),\ s\in(0,1),$ to be
\bb{interp1}
H_0^s(\Omega) = [H_0^1(\Omega),L^2(\Omega)]_\theta.
\ee

The definition of the fractional Laplacian on $\RR^d$ is given as follows.

\begin{define}
Let $S(\RR^d)$ be the class of Schwartz functions. Then for any $w\in S(\RR^d),$ we define the fractional Laplacian of order $s\in (0,1)$ of $w$ as
\bb{fraclap}
(-\Delta)^sw(x) = c_{d,s}\lim_{\epsilon\to 0}\int_{|x-y|\ge 0}\frac{w(x)-w(y)}{|x-y|^{d+2s}}\,dy,
\ee
where $c_{d,s}$ is the normalizing constant given by
$$c_{d,s} = \frac{4^s\Gamma(d/2 + s)}{\pi^{d/2}|\Gamma(-s)|}.$$
\end{define}

It is the case that there is not a unique way of extending the definition of the fractional Laplacian to a bounded domain $\Omega.$ An exploration of the properties of different definitions and their relationships can be found in \cite{servadei2014spectrum}. In this study, we adopt a functional calculus definition of the fractional Laplacian via the Dirichlet Laplacian. That is, let $-\Delta\,:\,L^2(\Omega)\to L^2(\Omega)$ be the classical Laplacian with domain $\mbox{Dom}(-\Delta) = \{w\in H^1_0(\Omega)\,:\,\Delta w \in L^2(\Omega)\}.$ It is known that this operator is unbounded, closed, and has a compact inverse. Thus, the spectrum of $-\Delta$ is discrete, positive, and accumulates at infinity. Moreover, there exists $\{\lambda_n,e_n\}_{n\in\NN} \subset \RR^+\times H_0^1(\Omega)$ such that $\{e_n\}_{n\in\NN}$ is an orthonormal basis of $L^2(\Omega)$ and
$$\l\{\begin{aligned}
-\Delta e_n & = \lambda_ne_n, & x &\in\Omega,\\
e_n & = 0, & x &\in\partial\Omega.
\end{aligned}\r.$$

Thus, we can define the fractional Laplacian, for $w\in C_0^\infty(\Omega),$ to be
\bb{specfrac}
(-\Delta)^sw \mathrel{\mathop:}= \sum_{n\in\NN}w_n\lambda_n^se_n,
\ee
where $w_n = \textstyle\int_\Omega we_n\,dx.$ The definition \R{specfrac} can be extended to the Hilbert space
$$\mathbb{H}^s(\Omega)\mathrel{\mathop:}= \l\{w = \sum_{n\in\NN}w_ne_n\in L^2(\Omega)\,:\, \|w\|_{\mathbb{H}^s(\Omega)}^2 = \sum_{n\in\NN}\lambda_n^s|w_n|^2<\infty\r\},$$
via density arguments. Further, we have that
$$\mbox{Dom}((-\Delta)^{s}) = [H_0^1(\Omega),L^2(\Omega)]_\theta,$$
for $\theta = 1-s.$ This gives us the following characterization of the space $\mathbb{H}^s(\Omega),$
$$\mathbb{H}^s(\Omega) = \l\{\begin{aligned}
& H^s(\Omega), & s &\in(0,1/2),\\
& H^{1/2}_{00}(\Omega), & s &= 1/2,\\
& H^s_0(\Omega), & s &\in(1/2,1).
\end{aligned}\r.$$

Finally, we introduce an important class of functions associated with fractional calculus, known as the Mittag-Leffler functions \cite{WANG2012202,podlubny1998fractional}. 

\begin{define}
Let $\alpha,\beta\in\CC$ with $\mbox{Re}(\alpha),\mbox{Re}(\beta)>0.$ Then we may define the generalized Mittag-Leffler function to be
$$E_{\alpha,\beta}(z) \mathrel{\mathop:}= \sum_{n=0}^\infty \frac{z^n}{\Gamma(\beta+\alpha n)} = \frac{1}{2\pi i}\int_{\gamma}\frac{\lambda^{\alpha-\beta}e^\lambda}{\lambda^\alpha - z}d\lambda,$$
where $\gamma$ is a contour which starts at $-\infty$ and encircles the disc $|\lambda|\le |z|^{1/\alpha}$ counterclockwise.
\end{define}

For $0<\alpha<1,$ $\beta>0,$ we have the following asymptotic expansion of $E_{\alpha,\beta}$ as $z\to\infty$
\bb{ml1}
E_{\alpha,\beta}(z) = \l\lbrace\begin{aligned}
& \alpha^{-1}z^{(1-\beta)/\alpha}\mbox{exp}(z^{1/\alpha})+\varepsilon_{\alpha,\beta}(z), & |\mbox{arg}\,z| &\le \alpha\pi/2,\\
&\varepsilon_{\alpha,\beta}(z), & |\mbox{arg}\,z| &<(1-\alpha/2)\pi,
\end{aligned}\r.
\ee
where
$$\varepsilon_{\alpha,\beta}(z) = -\sum_{n=1}^{N-1}\frac{z^{-n}}{\Gamma(\beta-\alpha n)} + \OO\l(|z|^{-N}\r),\quad\mbox{as}\ z\to\infty.$$
From the above, for any $\omega\in\CC$ we have
$$\partial_t^\alpha E_{\alpha,1}(\omega t^\alpha) = \omega E_{\alpha,1}(\omega t^\alpha) \quad\mbox{and}\quad J_t^{1-\alpha}\l(t^{\alpha-1}E_{\alpha,\alpha}(\omega t^\alpha)\r) = E_{\alpha,1}(\omega t^\alpha).$$
That is, the function $E_{\alpha,1}$ is the eigenfunction corresponding to the Caputo fractional derivative. Thus, $E_{\alpha,\beta}$ may be viewed as a generalization of the standard exponential function in the integer derivative case. This is further supported by the fact that $E_{1,1}(z) = e^z.$ Consider also the Wright function given by
$$\Psi_\alpha(z) \mathrel{\mathop:}= \sum_{n=0}^\infty \frac{(-z)^n}{n!\Gamma(1 - (n+1)\alpha)} = \frac{1}{\pi}\sum_{n=1}^\infty \frac{(-z)^n}{(n-1)!}\Gamma(n\alpha)\sin(n\pi\alpha),\quad z\in\CC,$$
with $0<\alpha<1.$ Then we have the following lemma.

\begin{lemma}
Let $0<\alpha<1.$ For $-1<r<\infty,$ $\lambda>0,$ the following results hold.
\begin{itemize}
\item[i.] $\Psi_\alpha(t)\ge 0,$ $t>0;$
\item[ii.] $\textstyle\int_0^\infty \alpha t^{-\alpha - 1}\Psi_\alpha(t^{-\alpha})e^{-\lambda t}\, dt = e^{-\lambda^\alpha};$
\item[iii.] $\textstyle\int_0^\infty \Psi_\alpha(t)t^r\,dt = \Gamma(1+r)/\Gamma(1+\alpha r);$
\item[iv.] $\textstyle\int_0^\infty \Psi_\alpha(t)e^{-zt}\,dt = E_{\alpha,1}(-z),\ z\in\CC;$
\item[v.] $\textstyle\int_0^\infty \alpha t \Psi_\alpha(t)e^{-zt}\,dt = E_{\alpha,\alpha}(-z),\ z\in\CC.$
\end{itemize}
\end{lemma}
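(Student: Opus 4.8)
The five assertions are not independent, and the plan is to extract all of them from a single integral representation of $\Psi_\alpha$ together with one appeal to Bernstein's theorem, in the logical order (iii) $\Rightarrow$ (ii) $\Rightarrow$ (i) $\Rightarrow$ (iv), (v). This ordering matters, because the tempting shortcuts are invalid. In particular, one cannot obtain (iii) or (iv) by integrating the defining series of $\Psi_\alpha$ term by term: the moments $\int_0^\infty t^n\Psi_\alpha(t)\,dt$ are \emph{not} the term-by-term values, and a careless interchange produces only the divergent asymptotic series $\sum_n(-1)^n/[\Gamma(1-(n+1)\alpha)\,z^{n+1}]$ rather than the convergent Mittag-Leffler series.

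First I would establish the moment identity (iii), which is the computational heart. Writing each reciprocal gamma factor through the Hankel representation $1/\Gamma(w)=(2\pi i)^{-1}\int_{Ha}e^\zeta\zeta^{-w}\,d\zeta$ and resumming the resulting exponential series gives the integral representation
\[
\Psi_\alpha(t)=\frac{1}{2\pi i}\int_{Ha}e^\zeta\,\zeta^{\alpha-1}\,e^{-t\zeta^\alpha}\,d\zeta,\qquad t>0.
\]
Inserting this into $\int_0^\infty t^r\Psi_\alpha(t)\,dt$, interchanging the two integrals (justified on a suitably deformed Hankel contour, where $\mathrm{Re}(\zeta^\alpha)>0$ and $e^\zeta$ provides decay), and using $\int_0^\infty t^r e^{-t\zeta^\alpha}\,dt=\Gamma(1+r)\zeta^{-\alpha(1+r)}$ collapses the $\zeta$-integral back into a Hankel integral, yielding $\Gamma(1+r)/\Gamma(1+\alpha r)$. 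As a by-product the same representation shows that $\Psi_\alpha$ decays faster than any exponential as $t\to\infty$, a fact I would record for use in (iv), (v).

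Next I would deduce (ii). Setting $p_\alpha(t):=\alpha t^{-\alpha-1}\Psi_\alpha(t^{-\alpha})$ and substituting $u=t^{-\alpha}$ reduces the fractional moments of $p_\alpha$ to moments of $\Psi_\alpha$, so (iii) gives $\int_0^\infty t^\beta p_\alpha(t)\,dt=\Gamma(1-\beta/\alpha)/\Gamma(1-\beta)$. Here one must \emph{not} expand $e^{-\lambda t}$ in powers of $t$, since the integer moments of $p_\alpha$ diverge; instead I would compare Mellin transforms in $\lambda$. A direct substitution gives $\int_0^\infty\lambda^{s-1}e^{-\lambda^\alpha}\,d\lambda=\alpha^{-1}\Gamma(s/\alpha)$, while interchanging the $\lambda$- and $t$-integrals in the Laplace transform of $p_\alpha$ and invoking the moment formula produces the same value $\alpha^{-1}\Gamma(s/\alpha)$. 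Uniqueness of the Mellin transform then forces the Laplace transform of $p_\alpha$ to equal $e^{-\lambda^\alpha}$, which is (ii).

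The nonnegativity (i) is the step I expect to be the genuine obstacle, since it cannot be read off from the sign-alternating series and requires analytic input beyond the moment bookkeeping above. With (ii) in hand it follows cleanly: because $0<\alpha<1$, the map $\lambda\mapsto\lambda^\alpha$ is a Bernstein function, so composing it with the completely monotone function $e^{-(\cdot)}$ shows that $\lambda\mapsto e^{-\lambda^\alpha}$ is completely monotone on $(0,\infty)$. Bernstein's theorem then identifies $e^{-\lambda^\alpha}$ as the Laplace transform of a nonnegative measure; comparing with (ii) and using continuity of $p_\alpha$ gives $p_\alpha\ge0$, and since $t\mapsto t^{-\alpha}$ maps $(0,\infty)$ onto $(0,\infty)$ this is exactly $\Psi_\alpha\ge0$. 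Finally, (iv) and (v) follow by expanding $e^{-zt}=\sum_k(-zt)^k/k!$ and integrating against $\Psi_\alpha$; the interchange is now legitimate by Tonelli, since (i) makes $\Psi_\alpha$ nonnegative and (iii) controls $\sum_k\frac{|z|^k}{k!}\int_0^\infty t^k\Psi_\alpha(t)\,dt=E_{\alpha,1}(|z|)<\infty$. Evaluating each moment by (iii) gives $\sum_k(-z)^k/\Gamma(1+\alpha k)=E_{\alpha,1}(-z)$ for (iv), while the identity $\Gamma(1+\alpha+\alpha k)=\alpha(k+1)\Gamma(\alpha(k+1))$ converts the weighted moments in (v) into $\sum_k(-z)^k/\Gamma(\alpha+\alpha k)=E_{\alpha,\alpha}(-z)$. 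The super-exponential decay from the first step guarantees convergence of these integrals for every $z\in\CC$, so both identities hold on the whole complex plane.
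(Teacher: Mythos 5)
Your proposal is correct, but it cannot be compared against an internal argument because the paper offers none: for this lemma the paper simply writes that the proof ``may be found in \cite{WANG2012202}'' (Wang--Chen--Xiao), deferring entirely to the literature. What you have reconstructed is, in essence, the standard proof from that literature (going back to Mainardi and Gorenflo--Mainardi): the Hankel-contour representation $\Psi_\alpha(t)=\frac{1}{2\pi i}\int_{Ha}e^{\zeta}\zeta^{\alpha-1}e^{-t\zeta^{\alpha}}\,d\zeta$ yields the moments (iii) on a contour where $\mathrm{Re}(\zeta^{\alpha})>0$ (possible precisely because $0<\alpha<1$ allows rays with $\tfrac{\pi}{2}<|\arg\zeta|<\tfrac{\pi}{2\alpha}$); Mellin-transform uniqueness converts the fractional moments into the stable-law Laplace transform (ii); complete monotonicity of $e^{-\lambda^{\alpha}}$ plus Bernstein's theorem gives the positivity (i); and Tonelli, now licensed by (i) and (iii), gives (iv) and (v), with the identity $\Gamma(1+\alpha(k+1))=\alpha(k+1)\Gamma(\alpha(k+1))$ producing the $E_{\alpha,\alpha}$ series. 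Your logical ordering is sound and free of circularity, and your two cautionary remarks are genuinely important and correct: term-by-term integration of the defining series of $\Psi_\alpha$ produces the divergent asymptotic series $\sum_n(-1)^n/[\Gamma(1-(n+1)\alpha)z^{n+1}]$ (the paper's $\varepsilon_{\alpha,\beta}$ expansion) rather than the Mittag-Leffler series, and the integer moments of the stable density $\alpha t^{-\alpha-1}\Psi_\alpha(t^{-\alpha})$ do indeed diverge, so the Mellin route in (ii) is not a pedantic detour but a necessity. What your approach buys is self-containedness and an explicit record of where the analytic content lies (the positivity step via Bernstein is the only non-computational input); what the paper's citation buys is brevity, appropriate since the lemma is standard background rather than a contribution of the paper.
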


The proof of Lemma 2.2 may be found in \cite{WANG2012202}.
The properties from Lemma 2.2 will be useful in deriving bounds for the operators generating the solution to \R{b1}.

\section{Properties of the Solution Operators} \clearallnum

Throughout this section we assume that $s\in (0,1)$ and we define the Banach space $X = C([0,T],\mathbb{H}^s(\Omega))$ with norm $\|u\|\mathrel{\mathop:}=\textstyle\sup_{t\in[0,T]}\|u(t)\|_{\mathbb{H}^s(\Omega)}.$ Let $\sigma(A)$ and $\rho(A)\mathrel{\mathop:}= \CC-\sigma(A)$ be the spectrum and resolvent set of the operator $A \mathrel{\mathop:}= (-\Delta)^s,$ respectively. It follows from \R{specfrac}, that $-A$ generates a Feller semigroup that has the following Dunford-Riesz representation
\bb{semi1}
T(t) = e^{-tz}(A) = \frac{1}{2\pi i}\int_{\Gamma_\theta}e^{-tz}R(z;A)\,dz,\quad t\in[0,T],
\ee
where $\Gamma_\theta$ is any contour containing $\sigma(A)$ and $R(z,A)$ is the resolvent operator defined as $R(z;A) \mathrel{\mathop:}= (zI-A)^{-1},\ z\in\rho(A).$ We now define the family of operators $\{S_\alpha(t)\}_{t\in[0,T]}$ and $\{P_\alpha(t)\}_{t\in[0,T]}$ to be
$$S_\alpha(t)\mathrel{\mathop:}= E_{\alpha,1}(-zt^\alpha)(A) = \frac{1}{2\pi i}\int_{\Gamma_\theta}E_{\alpha,1}(-zt^\alpha)R(z; A)\,dz,$$
$$P_\alpha(t)\mathrel{\mathop:}= E_{\alpha,\alpha}(-zt^\alpha)(A) = \frac{1}{2\pi i}\int_{\Gamma_\theta} E_{\alpha,\alpha}(-zt^\alpha)R(z;A)\,dz,$$
where $\Gamma_\theta$ is any contour containing $\sigma(A).$ 

We have the following useful properties of the operators $S_\alpha(t)$ and $P_\alpha(t).$

\begin{thm}
For each fixed $t\in[0,T],$ $S_\alpha(t)$ and $P_\alpha(t)$ are linear and bounded operators on $X.$ Moreover, we have the following bounds for all $t\in[0,T],$
\bb{mlbound}
\|S_\alpha(t)\|\le 1,\quad\mbox{and}\quad \|P_\alpha(t)\| \le \frac{1}{\Gamma(\alpha)}.
\ee
\end{thm}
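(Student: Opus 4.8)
The plan is to reduce the operator bounds to scalar estimates on the Mittag--Leffler functions by exploiting the spectral decomposition of $A = (-\Delta)^s$. Recall from \R{specfrac} that $A$ is diagonalized by the orthonormal basis $\{e_n\}_{n\in\NN}$, with $Ae_n = \lambda_n^s e_n$ and $\sigma(A) = \{\lambda_n^s\}_{n\in\NN}\subset\RR^+$. Since the resolvent $R(z;A)$ has simple poles exactly at the points $\lambda_n^s$, evaluating the Dunford--Riesz contour integrals defining $S_\alpha(t)$ and $P_\alpha(t)$ by residues would yield the diagonal representations
$$S_\alpha(t)w = \sum_{n\in\NN}E_{\alpha,1}(-\lambda_n^s t^\alpha)\,w_n e_n,\qquad P_\alpha(t)w = \sum_{n\in\NN}E_{\alpha,\alpha}(-\lambda_n^s t^\alpha)\,w_n e_n,$$
for $w = \sum_n w_n e_n$, where $w_n = \int_\Omega w e_n\,dx$. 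Linearity is then immediate, since each series depends linearly on the coefficients $w_n$.

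The key step is to bound the scalar multipliers uniformly. Because $\lambda_n>0$ and $t\ge 0$, every argument $-\lambda_n^s t^\alpha$ is a nonpositive real number, so I would write $z = \lambda_n^s t^\alpha\ge 0$ and invoke the Wright-function integral representations from Lemma 2.2. Properties (iv) and (v) give $E_{\alpha,1}(-z) = \int_0^\infty \Psi_\alpha(\tau)e^{-z\tau}\,d\tau$ and $E_{\alpha,\alpha}(-z) = \int_0^\infty \alpha\tau\,\Psi_\alpha(\tau)e^{-z\tau}\,d\tau$. Using the positivity $\Psi_\alpha\ge 0$ (property (i)) together with $0\le e^{-z\tau}\le 1$, both quantities are nonnegative and bounded above by their values at $z=0$. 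Those values equal the series values $E_{\alpha,1}(0)=1/\Gamma(1)=1$ and $E_{\alpha,\alpha}(0)=1/\Gamma(\alpha)$, equivalently obtained from the moment formula (property (iii)) as $\int_0^\infty \Psi_\alpha(\tau)\,d\tau = \Gamma(1)/\Gamma(1)=1$ and $\int_0^\infty \alpha\tau\,\Psi_\alpha(\tau)\,d\tau = \alpha\,\Gamma(2)/\Gamma(1+\alpha)=1/\Gamma(\alpha)$. Hence $0\le E_{\alpha,1}(-z)\le 1$ and $0\le E_{\alpha,\alpha}(-z)\le 1/\Gamma(\alpha)$ uniformly in $z\ge 0$, and therefore uniformly in $n\in\NN$ and $t\in[0,T]$.

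Finally I would transfer these scalar estimates to the operator norm using the definition of $\|\cdot\|_{\mathbb{H}^s(\Omega)}$. For $S_\alpha(t)$ the Parseval-type computation
$$\|S_\alpha(t)w\|_{\mathbb{H}^s(\Omega)}^2 = \sum_{n\in\NN}\lambda_n^s\,\l|E_{\alpha,1}(-\lambda_n^s t^\alpha)\r|^2|w_n|^2 \le \sum_{n\in\NN}\lambda_n^s|w_n|^2 = \|w\|_{\mathbb{H}^s(\Omega)}^2$$
gives $\|S_\alpha(t)\|\le 1$, and the identical estimate with the multiplier bound $1/\Gamma(\alpha)$ gives $\|P_\alpha(t)\|\le 1/\Gamma(\alpha)$, which is exactly \R{mlbound}. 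Since both bounds are independent of $t\in[0,T]$, they also control the action of these operators pointwise in time on $X$.

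I expect the only genuine subtlety to be the first step: rigorously justifying that the functional-calculus contour integrals coincide with the diagonal series above, i.e. carrying out the residue evaluation and verifying convergence of the resulting series in $\mathbb{H}^s(\Omega)$. Once that identification is secured, the boundedness and the explicit constants follow directly from the positivity and moment properties of $\Psi_\alpha$ in Lemma 2.2.
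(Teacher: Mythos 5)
Your proof is correct, but it takes a genuinely different route from the paper's. The paper never diagonalizes the operators: it works at the operator level, using Lemma 2.2 (iv), (v) together with Fubini's theorem to establish the subordination formulas
$$S_\alpha(t)u = \int_0^\infty \Psi_\alpha(\tau)\,T(\tau t^\alpha)u\,d\tau, \qquad P_\alpha(t)u = \int_0^\infty \alpha\tau\,\Psi_\alpha(\tau)\,T(\tau t^\alpha)u\,d\tau,$$
where $T(\cdot)$ is the Feller semigroup generated by $-A$; the bounds then follow from the triangle inequality, the contraction property $\|T(\cdot)\|\le 1$, and the moment formula in Lemma 2.2 (iii). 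You instead diagonalize: you evaluate the Dunford--Riesz integrals by residues to get the spectral multiplier representations, prove the scalar bounds $0\le E_{\alpha,1}(-z)\le 1$ and $0\le E_{\alpha,\alpha}(-z)\le 1/\Gamma(\alpha)$ for $z\ge 0$ (using the same Wright-function properties, applied to scalar Laplace transforms rather than operator-valued ones), and transfer them to operator norms via Parseval in $\mathbb{H}^s(\Omega)$. Both arguments are sound and both ultimately rest on Lemma 2.2. What your approach buys: it is more explicit and in principle sharper, since for a diagonal operator the norm \emph{equals} the supremum of the multipliers; the subtlety you correctly flag (identifying the contour integral with the eigenfunction series) is routine for a self-adjoint operator with compact resolvent. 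What the paper's approach buys: it needs only that $-A$ generates a contraction semigroup, so it survives generalizations where $A$ is not self-adjoint or lacks a discrete spectrum; and, more to the point for this paper, the subordination formulas \R{sbound1} and \R{pbound1} are reused verbatim in Lemma 5.1 to prove positivity of $S_\alpha(t)$ and $P_\alpha(t)$ from positivity of $\Psi_\alpha$ and of the semigroup, something your diagonal series does not deliver as directly.
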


\begin{proof}
The proof follows methods similar to those employed in \cite{WANG2012202}, where we introduce sharper bounds based on the given operator $A.$ We note that the operators are well-defined linear bounded operators on $X$ via \R{ml1}. Thus, we simply need to show that \R{mlbound} holds for all $t\in[0,T].$ For $t\in[0,T]$ we have for any $u\in X,$
\bbb
S_\alpha(t)u &=& \frac{1}{2\pi i}\int_{\Gamma_\theta}E_{\alpha,1}(-zt^\alpha)R(z;A)u\,dz\nnn\\
& = & \frac{1}{2\pi i}\int_0^\infty \Psi_\alpha(\lambda)\int_{\Gamma_\theta}e^{-\lambda zt^\alpha}R(z;A)u\,dz\,d\lambda\nnn\\
& = & \int_0^\infty \Psi_\alpha(s) T(st^\alpha)u\, ds,\label{sbound1}
\eee
by {\em iv.} of Lemma 2.2, {\em ii.} of Lemma 3.1, and Fubini's theorem. Thus, by {\em iii.} of Lemma 2.2 and the fact that $T(t)$ is a contraction, we have
\bb{sbound2}
\|S_\alpha(t)u\| \le \int_0^\infty \Psi_\alpha(s)\|u\|\,ds = \|u\|,\quad t\in[0,T],\ u\in X,
\ee
as desired. Now, an argument similar to the above gives 
\bb{pbound1}
P_\alpha(t) = \int_0^\infty \alpha s\Psi_\alpha(s)T(st^\alpha)u\, ds,\quad t\in[0,T],\ u\in X.
\ee
Once again, by {\em iii.} of Lemma 2.2 and the fact that $T(t)$ is a contraction, we have
\bb{pbound2}
\|P_\alpha(t)u\| \le \alpha\int_0^\infty s\Psi_\alpha(s)\|u\|\,ds \le \frac{\alpha\Gamma(2)}{\Gamma(1+\alpha)}\|u\| = \frac{\|u\|}{\Gamma(\alpha)},
\ee
for $t\in[0,T],\ u\in X,$ as desired. Thus, the estimate \R{mlbound} holds.
\end{proof}

The family of operators $\{S_\alpha(t)\}_{t\in[0,T]}$ and $\{P_\alpha(t)\}_{t\in[0,T]}$ are well-studied families of operators with numerous useful properties. Of particular interest are the studies connecting $\alpha-$resolvent and $(\alpha,\beta)-$resolvent operators to the solution operators of fractional Cauchy problems \cite{LIZAMA2011184}. Other useful properties of these families are outlined in the following lemma.

\begin{lemma}
The operators $S_\alpha(t)$ and $P_\alpha(t)$ have the following properties.
\begin{itemize}
\item[i.] $\{S_\alpha(t)\}_{t\in[0,T]}$ and $\{P_\alpha(t)\}_{t\in[0,T]}$ are strongly continuous.
\item[ii.] For every $t\in[0,T],$ $S_\alpha$ and $P_\alpha$ are compact operators.
\item[iii.] $S_\alpha '(t)u = -t^{\alpha-1}AP_\alpha(t)u$ and $S_\alpha '(t)u$ is locally integrable on $(0,\infty)$ for $u\in X.$
\item[iv.] For all $u\in\mbox{Dom}(A),$ $t\in[0,T],$ we have $\partial_t^\alpha S_\alpha(t)u = -AS_\alpha(t)u.$
\item[v.] For all $t\in[0,T],$ we have $S_\alpha(t) = J_t^{1-\alpha}(t^{\alpha-1}P_\alpha(t)).$
\end{itemize}
\end{lemma}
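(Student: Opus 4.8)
The plan is to leverage the fact that $A=(-\Delta)^s$ is self-adjoint and positive with purely discrete spectrum $\{\lambda_n^s\}_{n\in\NN}$ and associated $L^2$-orthonormal eigenbasis $\{e_n\}_{n\in\NN}$, so that by the functional calculus \R{specfrac} both families act diagonally. Writing $u=\sum_{n\in\NN}u_ne_n$ with $u_n=\int_\Omega ue_n\,dx$, I would use throughout the representations
$$S_\alpha(t)u = \sum_{n\in\NN}E_{\alpha,1}(-\lambda_n^st^\alpha)u_ne_n,\qquad P_\alpha(t)u = \sum_{n\in\NN}E_{\alpha,\alpha}(-\lambda_n^st^\alpha)u_ne_n.$$
Each assertion \emph{i.}--\emph{v.} then collapses to a \emph{scalar} statement about the Mittag-Leffler multipliers $E_{\alpha,1}(-\lambda_n^st^\alpha)$ and $E_{\alpha,\alpha}(-\lambda_n^st^\alpha)$, together with a justification that the relevant operation in $t$ may be moved inside the spectral sum. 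I will also call on the subordination formulas \R{sbound1} and \R{pbound1} from Theorem 3.1 and on the fact that $T(t)=e^{-tA}$ is a self-adjoint analytic contraction semigroup, so that $\|AT(\tau)\|\le C\tau^{-1}$ for $\tau>0$.

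For \emph{i.} I would fix $u\in\mathbb{H}^s(\Omega)$ and $t_0\in[0,T]$ and write
$$\|S_\alpha(t)u-S_\alpha(t_0)u\|_{\mathbb{H}^s(\Omega)}^2 = \sum_{n\in\NN}\lambda_n^s\l|E_{\alpha,1}(-\lambda_n^st^\alpha)-E_{\alpha,1}(-\lambda_n^st_0^\alpha)\r|^2|u_n|^2.$$
Because $0\le E_{\alpha,1}(-x)\le 1$ for $x\ge 0$, every summand is dominated by $4\lambda_n^s|u_n|^2$, which is summable precisely because $u\in\mathbb{H}^s(\Omega)$; since each summand tends to $0$ by continuity of $E_{\alpha,1}$, dominated convergence yields $S_\alpha(t)u\to S_\alpha(t_0)u$, and the same computation with the bound $0\le E_{\alpha,\alpha}(-x)\le 1/\Gamma(\alpha)$ settles $P_\alpha$. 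For \emph{ii.} the decisive point is that, for each fixed $t>0$, the diagonal multipliers vanish as $n\to\infty$: since $\lambda_n^s\to\infty$ and, by \R{ml1}, $E_{\alpha,1}(-x),E_{\alpha,\alpha}(-x)\to 0$ as $x\to\infty$, both $E_{\alpha,1}(-\lambda_n^st^\alpha)$ and $E_{\alpha,\alpha}(-\lambda_n^st^\alpha)$ tend to $0$. Consequently the finite-rank truncations $\sum_{n\le N}E_{\alpha,1}(-\lambda_n^st^\alpha)u_ne_n$ converge to $S_\alpha(t)$ in operator norm, the tail norm being exactly $\sup_{n>N}|E_{\alpha,1}(-\lambda_n^st^\alpha)|$, so $S_\alpha(t)$ is a norm limit of finite-rank operators and hence compact; the argument for $P_\alpha(t)$ is identical. (At $t=0$ one has $S_\alpha(0)=I$, so compactness is genuinely a statement for $t\in(0,T]$.)

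For \emph{iii.}, \emph{iv.}, and \emph{v.} I would differentiate or integrate the multipliers termwise by means of the scalar identities $\frac{d}{dt}E_{\alpha,1}(-zt^\alpha)=-zt^{\alpha-1}E_{\alpha,\alpha}(-zt^\alpha)$ (an immediate consequence of termwise differentiation of the defining series), together with the identities $\partial_t^\alpha E_{\alpha,1}(\omega t^\alpha)=\omega E_{\alpha,1}(\omega t^\alpha)$ and $J_t^{1-\alpha}\l(t^{\alpha-1}E_{\alpha,\alpha}(\omega t^\alpha)\r)=E_{\alpha,1}(\omega t^\alpha)$ recorded in Section 2. Applying the first mode by mode gives
$$S_\alpha'(t)u = -\sum_{n\in\NN}\lambda_n^st^{\alpha-1}E_{\alpha,\alpha}(-\lambda_n^st^\alpha)u_ne_n = -t^{\alpha-1}AP_\alpha(t)u,$$
which is \emph{iii.}; its local integrability on $(0,\infty)$ then follows from the bound $\|S_\alpha'(t)u\|=t^{\alpha-1}\|AP_\alpha(t)u\|\le Ct^{-1}\|u\|$, obtained by inserting $\|AT(\tau)\|\le C\tau^{-1}$ into \R{pbound1} and using $\int_0^\infty\Psi_\alpha(s)\,ds=1$, a quantity finite on every compact subset of the open interval $(0,\infty)$. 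Feeding the remaining two identities through the sum yields $-AS_\alpha(t)u=\sum_n\partial_t^\alpha E_{\alpha,1}(-\lambda_n^st^\alpha)u_ne_n=\partial_t^\alpha S_\alpha(t)u$ for \emph{iv.} and $S_\alpha(t)u=\sum_nJ_t^{1-\alpha}\l(t^{\alpha-1}E_{\alpha,\alpha}(-\lambda_n^st^\alpha)\r)u_ne_n=J_t^{1-\alpha}\l(t^{\alpha-1}P_\alpha(t)\r)u$ for \emph{v.}

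The main obstacle is the rigorous justification of these termwise manipulations when the operation in $t$ is the \emph{nonlocal} one, namely $\partial_t^\alpha$ in \emph{iv.} and $J_t^{1-\alpha}$ in \emph{v.}, each of which integrates the entire series over $[0,t]$. For \emph{iv.} I would restrict to $u\in\mbox{Dom}(A)$, so that $Au\in\mathbb{H}^s(\Omega)$ and hence $\sum_n\lambda_n^s|\lambda_n^su_n|^2<\infty$; the eigenvalue identity reduces the $n$-th term of $\partial_t^\alpha S_\alpha(t)u$ to $-\lambda_n^sE_{\alpha,1}(-\lambda_n^st^\alpha)u_ne_n$, whose $\mathbb{H}^s(\Omega)$-weight is bounded uniformly in $t\in[0,T]$ by $\lambda_n^s|\lambda_n^su_n|^2$ thanks to $0\le E_{\alpha,1}(-x)\le 1$. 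This uniform, summable majorant legitimizes interchanging $\partial_t^\alpha$ with the sum via Fubini's theorem and dominated convergence; an analogous majorant, now using the integrability of $s\mapsto s^{\alpha-1}$ near the origin together with $\|P_\alpha\|\le 1/\Gamma(\alpha)$ from \R{mlbound}, handles the interchange of $J_t^{1-\alpha}$ in \emph{v.} I expect \emph{iv.} to be the genuinely delicate step, since it is there that the nonlocal-in-time operator and the infinite spectral expansion must be reconciled.
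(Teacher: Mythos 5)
Your proposal is correct, but it should be said up front that the paper does not actually prove this lemma: its entire ``proof'' is a one-line deferral to \cite{WANG2012202}, where properties \emph{i.}--\emph{v.} are established for fractional Cauchy problems governed by general (almost) sectorial operators, using the Wright-function subordination representations and resolvent/semigroup estimates --- no eigenbasis exists, or is used, in that generality. You instead exploit what is special about the spectral fractional Dirichlet Laplacian: $A$ is self-adjoint and positive with compact resolvent, so by \R{specfrac} both $S_\alpha(t)$ and $P_\alpha(t)$ act diagonally on $\{e_n\}_{n\in\NN}$, and every assertion collapses to a scalar identity for the multipliers $E_{\alpha,1}(-\lambda_n^s t^\alpha)$, $E_{\alpha,\alpha}(-\lambda_n^s t^\alpha)$ plus a termwise-interchange justification; your majorants for those interchanges are the right ones (the uniform bounds $0\le E_{\alpha,1}(-x)\le 1$ and $0\le E_{\alpha,\alpha}(-x)\le 1/\Gamma(\alpha)$, the analytic-semigroup smoothing bound $\|AT(\tau)\|\le C\tau^{-1}$, and the Beta-integrability of $(t-\tau)^{-\alpha}\tau^{\alpha-1}$ in \emph{iv.}). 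Two remarks. First, the scalar bounds you invoke are not free, but they follow from the paper's own toolkit: apply the subordination formulas \R{sbound1} and \R{pbound1} to a single eigenvector and use $\Psi_\alpha\ge 0$ together with \emph{iii.} of Lemma 2.2, so your argument stays self-contained within the paper. Second, your parenthetical remark about $t=0$ is in fact a small correction to the statement as written: $S_\alpha(0)=I$ and $P_\alpha(0)=\Gamma(\alpha)^{-1}I$ are not compact on the infinite-dimensional space $\mathbb{H}^s(\Omega)$, so \emph{ii.} can only hold for $t\in(0,T]$. As for what each route buys: yours is elementary, self-contained, and quantitatively sharp (the operator norm of a spectral tail is exactly the supremum of its multipliers, which is what makes compactness immediate, and which the contour-integral definition obscures); the paper's citation buys generality and consistency with its Dunford--Riesz framing in Theorem 3.1, and would survive replacing $A$ by a non-self-adjoint sectorial operator, where your diagonalization is unavailable.
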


\no The proof of Lemma 3.1 may be found in \cite{WANG2012202}.

\section{Local Existence and Uniqueness} \clearallnum

In order to investigate the existence and uniqueness of \R{b1} we recast the problem into the setting of a Banach space $X = C([0,T],\mathbb{H}^s(\Omega))$ by considering
\bb{c1}
\l\{\begin{aligned}
\partial^\alpha_t u_t &= -Au_t + f(u_t), & 0<t<T,\\
u_{0} &\in\mathbb{H}^s(\Omega),
\end{aligned}\r.
\ee
where $u_t \mathrel{\mathop:}= u(\cdot, t)$ and $A \mathrel{\mathop:}= (-\Delta)^s\,:\mbox{Dom}(A) = \mathbb{H}^s(\Omega)\subset X \to X.$ We further note that $X$ has the norm $\|u_t\|\mathrel{\mathop:}=\textstyle\sup_{t\in [0,T]} \|u_t\|_{\mathbb{H}^s(\Omega)}.$ We now define our notion of solution when considering \R{c1}.

\begin{define}
A function $u_t\in X$ is a mild solution to \R{c1} if $\|u_t\|<c$ and for any $t\in [0,T]$ 
\bb{sol1}
u_t = S_\alpha(t)u_0 + \int_{0}^t (t-s)^{\alpha-1}P_\alpha(t-s)f(u_s)\,ds.
\ee
\end{define}

We now demonstrate the local existence and uniqueness of the solution to \R{c1} via the Banach fixed point theorem.

\begin{thm}
There exists a $T>0$ such that \R{c1} has a unique mild solution $u_t$ on the interval $[0,T].$
\end{thm}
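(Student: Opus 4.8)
The plan is to recast the mild-solution identity \R{sol1} as a fixed-point equation and apply the Banach fixed point theorem. For $u\in X$ define the nonlinear map
$$
(\mathcal{F}u)_t \mathrel{\mathop:}= S_\alpha(t)u_0 + \int_0^t (t-s)^{\alpha-1}P_\alpha(t-s)f(u_s)\,ds,
$$
so that mild solutions of \R{c1} are precisely the fixed points of $\mathcal{F}.$ Fix a radius $R$ with $\|u_0\|_{\mathbb{H}^s(\Omega)}\le R<\rho$ and set $E_R\mathrel{\mathop:}=\{u\in X:\|u\|\le R\},$ a closed and hence complete subset of $X.$ The argument then has three parts: first, that $\mathcal{F}$ carries each $u\in E_R$ into $X,$ i.e. that $t\mapsto(\mathcal{F}u)_t$ is continuous; second, that $\mathcal{F}(E_R)\subseteq E_R$ for $T$ small; and third, that $\mathcal{F}$ is a strict contraction on $E_R$ for $T$ small. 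The last two parts produce a unique fixed point in $E_R,$ which is the asserted solution.

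For the self-mapping estimate I would invoke the operator bounds \R{mlbound}. Since $f$ is continuous and satisfies \R{lip} on $B_\rho,$ it is bounded on $E_R;$ writing $M\mathrel{\mathop:}=\sup_{u\in E_R}\|f(u)\|_{\mathbb{H}^s(\Omega)}<\infty$ (finite because $\|f(u)\|_{\mathbb{H}^s(\Omega)}\le\|f(0)\|_{\mathbb{H}^s(\Omega)}+L_f(c)R$ for any reference point), and using $\|S_\alpha(t)\|\le 1,$ $\|P_\alpha(t-s)\|\le 1/\Gamma(\alpha),$ together with $\int_0^t(t-s)^{\alpha-1}\,ds=t^\alpha/\alpha,$ gives
$$
\|(\mathcal{F}u)_t\|_{\mathbb{H}^s(\Omega)}\le\|u_0\|_{\mathbb{H}^s(\Omega)}+\frac{M\,t^\alpha}{\Gamma(\alpha+1)}\le\|u_0\|_{\mathbb{H}^s(\Omega)}+\frac{M\,T^\alpha}{\Gamma(\alpha+1)}.
$$
Taking the supremum over $t\in[0,T]$ shows $\mathcal{F}(E_R)\subseteq E_R$ as soon as $M\,T^\alpha/\Gamma(\alpha+1)\le R-\|u_0\|_{\mathbb{H}^s(\Omega)},$ which holds for all sufficiently small $T.$

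For the contraction estimate, given $u,v\in E_R$ the terms $S_\alpha(t)u_0$ cancel, and applying \R{lip} under the integral together with $\|P_\alpha(t-s)\|\le 1/\Gamma(\alpha)$ yields
$$
\|(\mathcal{F}u)_t-(\mathcal{F}v)_t\|_{\mathbb{H}^s(\Omega)}\le\frac{L_f(c)}{\Gamma(\alpha)}\int_0^t(t-s)^{\alpha-1}\|u_s-v_s\|_{\mathbb{H}^s(\Omega)}\,ds\le\frac{L_f(c)\,T^\alpha}{\Gamma(\alpha+1)}\,\|u-v\|.
$$
Taking the supremum over $t$ shows $\mathcal{F}$ is Lipschitz on $E_R$ with constant $L_f(c)T^\alpha/\Gamma(\alpha+1),$ which is strictly less than one provided $T^\alpha<\Gamma(\alpha+1)/L_f(c).$ Choosing $T$ to meet both smallness requirements at once, the Banach fixed point theorem supplies a unique $u\in E_R$ with $\mathcal{F}u=u,$ which is the unique mild solution on $[0,T].$

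The part I expect to require genuine care is the first one, namely verifying $\mathcal{F}u\in X.$ Continuity of $t\mapsto S_\alpha(t)u_0$ is the strong continuity from \emph{i.} of Lemma 3.1, while continuity of the convolution term follows by splitting the difference of the integrals over $[0,t]$ and $[0,t+h],$ using the uniform bound $M,$ the integrability of the weak singularity $(t-s)^{\alpha-1},$ and strong continuity of $P_\alpha$ via dominated convergence. A subtler point worth flagging is the interplay of norms: $f$ is prescribed on the $L^\infty$-ball $B_\rho,$ whereas both the Lipschitz bound \R{lip} and the space $X$ are framed in the $\mathbb{H}^s(\Omega)$ norm, so the argument tacitly requires that the $\mathbb{H}^s(\Omega)$-ball $E_R$ sit inside $B_\rho$ (or that \R{lip} be read as a hypothesis on an $\mathbb{H}^s(\Omega)$-ball); reconciling these is where the embedding properties of $\mathbb{H}^s(\Omega)$ enter.
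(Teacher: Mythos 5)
Your proposal is correct and follows essentially the same route as the paper: a Banach fixed point argument on a closed ball in $X,$ using the bounds \R{mlbound} for $S_\alpha$ and $P_\alpha,$ the linear growth of $f$ derived from \R{lip}, and the elementary estimate $\int_0^t(t-s)^{\alpha-1}\,ds = t^\alpha/\alpha$ to make both the self-mapping and contraction constants small for small $T.$ If anything, your write-up is slightly more careful than the paper's: you verify that $\mathcal{F}u$ is actually continuous in $t$ (the paper omits this), your self-mapping estimate closes against the ball radius $R$ rather than against $c,$ and you correctly flag the unresolved tension between $f$ being defined on the $L^\infty$-ball $B_\rho$ while \R{lip} and the ambient space are framed in the $\mathbb{H}^s(\Omega)$ norm.
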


\begin{proof}
Let $0<r<c$ be fixed and consider the set 
$$B_{r,T} \mathrel{\mathop:}= \l\lbrace u_t\,:\, t\in [0,T),\ \|u_t\|\le r\r\rbrace$$
We note that the set $B_{r,T}$ is a nonempty closed subset of $X.$ Hence, $B_{r,T}$ is a Banach space with norm $\|u_t\|_{B_{r,T}} \mathrel{\mathop:}= \|u_t\|.$ For any $u_t\in B_{r,T}$ we define $F\,:\,B_{r,T}\to X$ as
\bb{op}
(Fu)_t \mathrel{\mathop:}= S_\alpha(t)u_0 + \int_{0}^t (t-s)^{\alpha-1}P_\alpha(t-s)f(u_s)\,ds.
\ee
In order to apply the Banach fixed point theorem, we must show that $F$ is actually a contraction mapping into $B_{r,T}.$ Since $B_{r,T}\subset X,$ it suffices to show that $\|(Fu)_t\|_{B_{r,T}}\le r.$ We first note that
\bb{bound1}
\|S_\alpha(t)u_0\|_{B_{r,T}} \le \|S_\alpha(t)u_0\| \le \|u_0\| \equiv r_0 \ll c
\ee
by \R{mlbound}. Since $f$ is Lipschitz continuous, it follows from \R{lip} that
\bb{growth}
\|f(u)\|_{\mathbb{H}^s(\Omega)} \le L_f(c)(1+\|u\|_{\mathbb{H}^s(\Omega)}),
\ee
for $u\in B_c.$ We then have the following bound
\bbb
&& \l\|\int_{0}^t (t-s)^{\alpha-1}P_\alpha(t-s)f(u_s)\,ds\r\|_{B_{r,T}}\nnn\\
&& ~~~~~~~~~~~~~~~~~~~~~~~~~~~~~~ \le \int_{0}^t |(t-s)^{\alpha -1}|\|P_\alpha(t-s)\|\|f(u_s)\|\,ds\nnn\\
&& ~~~~~~~~~~~~~~~~~~~~~~~~~~~~~~ \le \frac{1}{\Gamma(\alpha)} \int_{0}^t (t-s)^{\alpha-1}L_f(c)\l(1+\|u_s\|_{B_{r,T}}\r)\,ds\nnn\\
&& ~~~~~~~~~~~~~~~~~~~~~~~~~~~~~~ \le \frac{1}{\Gamma(\alpha)}\frac{t^\alpha}{\alpha}L_f(c)(1+c)\nnn\\
&& ~~~~~~~~~~~~~~~~~~~~~~~~~~~~~~ \le \frac{1+c}{\Gamma(1+\alpha)}L_f(c)T^\alpha\label{bound2}
\eee
where we have employed the growth condition \R{growth} and the bound \R{mlbound} from Theorem 3.1. Combining \R{bound1} and \R{bound2} gives
$$\|(Fu)_t\|_{B_{r,T}} \le r_0 + \frac{1+c}{\Gamma(1+\alpha)}L_f(c)T^\alpha \le c$$
by choosing choosing $T\in(0,T_1)$ where 
$$T_1 \mathrel{\mathop:}=  \l(\frac{(c-r_0)\Gamma(1+\alpha)}{L_f(c)(1+c)}\r)^{1/\alpha}.$$
Thus, $F\,:\,B_{r,T}\to B_{r,T}$ on $[0,T].$

Next we must show that $F$ is a contraction in $X.$ To that end we have
$$(Fu)_t - (Fv)_t = \int_{0}^t (t-s)^{\alpha-1}P_\alpha(t-s)\l[f(u_s)-f(v_s)\r]\, ds.$$
It then follows that
\bbbb
\|(Fu)_t-(Fv)_t\|_{B_{r,T}} &\le & \int_{0}^t |(t-s)^{\alpha-1}|\|P_\alpha(t-s)\|\|f(u_s)-f(v_s)\|\, ds\\
& \le & \frac{L_f(c)\alpha}{\Gamma(1+\alpha)}\int_{0}^t (t-s)^{\alpha-1}\|u_s - v_s\|\,ds\\
& \le & \frac{L_f(c)}{\Gamma(1+\alpha)}T^\alpha \|u_t-v_t\|_{B_{r,T}}
\eeee
Thus, $F$ is a contraction on $B_{r,T}$ for $t\in[0,T]$ if 
$$0<T<T_2 \mathrel{\mathop:}= \l(\frac{\Gamma(1+\alpha)}{L_f(c)}\r)^{1/\alpha}.$$

Let $T = \min\{T_1,T_2\}.$ Then we have that $F$ has a unique fixed point in $B_{r,T}$ for $t\in[0,T]$ via the Banach fixed point theorem. Thus, \R{c1} has a unique mild solution on $[0,T].$
\end{proof}

\begin{rem}
We note that the solution can be extended in time as long as $\|u\|<c$ and the solution only ceases to exist once $\|u\|=c.$
\end{rem}

\section{Solution Positivity and Monotonicity} \clearallnum

Classical studies regarding the integer order Kawarada equations have often considered the solution positivity and monotonicy \cite{Josh1,Josh3,Padgett4,Kavallaris2008}. These properties are critical in many situations modeled by these equations, such as solid-fuel combustion \cite{chan1995thermal}. To that end, we determine conditions under which the continuous solution to \R{b1} is positive and monotone on its interval of existence.

Positivity of the solution to \R{b1} is relatively straightforward to verify. We summarize the result with the following lemma.

\begin{lemma}
The solution to \R{b1} given by \R{sol1} is positive on its interval of existence. 
\end{lemma}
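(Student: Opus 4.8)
The plan is to read positivity directly off the mild-solution representation \R{sol1} by showing that every operator appearing there sends nonnegative data to nonnegative output. The decisive structural fact is the subordination representation obtained in the proof of Theorem 3.1, namely \R{sbound1} and \R{pbound1},
$$S_\alpha(t)u = \int_0^\infty \Psi_\alpha(s)T(st^\alpha)u\,ds,\qquad P_\alpha(t)u = \int_0^\infty \alpha s\,\Psi_\alpha(s)T(st^\alpha)u\,ds,$$
which express both solution operators as averages of the semigroup $T(\cdot)$ against the Wright kernel $\Psi_\alpha.$ Since $-A$ generates a Feller semigroup, $T(t)$ is positivity preserving, {\em i.e.}, $v\ge 0$ implies $T(t)v\ge 0$ for every $t\in[0,T].$ Coupling this with {\em i.} of Lemma 2.2, which yields $\Psi_\alpha(s)\ge 0$ for $s>0,$ both integrands above are nonnegative whenever $v\ge 0,$ so for each fixed $t\in[0,T]$ the operators $S_\alpha(t)$ and $P_\alpha(t)$ are themselves positivity preserving.

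Granting this, the proof reduces to checking the sign of each term in \R{sol1}. First, since $0\le u_0\ll c$ we have $u_0\ge 0,$ whence $S_\alpha(t)u_0\ge 0.$ Second, by hypothesis $f$ takes values in $\RR^+,$ so $f(u_s)\ge 0$ for every $s$ in the interval of existence; note that this needs only that $u_s$ lie in the domain of $f$ (which holds while $\|u_s\|<c$) and does not presuppose the sign of $u_s,$ so the argument is not circular. Because $P_\alpha(t-s)$ preserves positivity and the weight $(t-s)^{\alpha-1}$ is strictly positive for $0<s<t,$ the integrand $(t-s)^{\alpha-1}P_\alpha(t-s)f(u_s)$ is nonnegative, and hence so is the integral. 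Adding the two nonnegative contributions gives $u_t\ge 0$ throughout the interval of existence.

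The only real obstacle is the justification that $S_\alpha(t)$ and $P_\alpha(t)$ preserve positivity; once the subordination formulas and the Feller property are in hand this is immediate, but it is worth stressing that the nonnegativity of the Wright function is precisely the ingredient that transports the positivity of the semigroup through the subordination integral. Should one wish to upgrade nonnegativity to strict positivity, one would further invoke $f(u)>0$ together with the positivity-improving character of $T(t)$ for $t>0$; however, mere nonnegativity is all that \R{sol1} delivers directly and all that the statement requires.
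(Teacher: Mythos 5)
Your proof is correct and follows essentially the same route as the paper's: both use the subordination formulas \R{sbound1} and \R{pbound1} to write $S_\alpha(t)$ and $P_\alpha(t)$ as Wright-kernel averages of the Feller semigroup $T(\cdot),$ deduce that both operators preserve positivity from $\Psi_\alpha\ge 0$ and the positivity of $T(t),$ and then read nonnegativity of each term of \R{sol1} from $u_0\ge 0$ and $f\ge 0.$ Your explicit remark that $f(u_s)\ge 0$ requires only that $u_s$ lie in the domain of $f$ (avoiding circularity) is a point the paper leaves implicit.
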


\begin{proof}
Recall \R{sbound1}. Then we have the following representation of $S_\alpha u_0:$
$$S_\alpha(t)u_0 = \int_0^\infty \Psi_\alpha(s) T(st^\alpha)u_0\, ds.$$
By {\em i.} of Lemma 2.3, we have that $\Psi_\alpha(s)\ge 0$ and we have that $T(st^\alpha)$ is positive for $t\in[0,T]$ by definition. 
%Hence, it follows that $S_\alpha(t)$ is a positive operator for $t\in[0,T].$ 
By the assumption that $0\le u_0<c,$ it follows that $S_\alpha(t)u_0$ is nonnegative.

Similarly, by recalling \R{pbound1}, we have the following representation of $P_\alpha:$
$$P_\alpha(t) = \int_0^\infty \alpha s\Psi_\alpha(s)T(st^\alpha)\,ds.$$
A similar argument gives that $P_\alpha(t)$ is a nonnegative operator for $t\in[0,T]$ since $\alpha,s>0.$ Thus, it follows that
$$\int_0^t (t-s)^{\alpha-1}P_\alpha(t-s)f(u_s)\,ds$$
is positive for $t\in[0,T].$ The result follows by the fact that the sum of these operators will preserve positivity.
\end{proof}

\begin{rem}
From Lemma 5.1 we are also able to conclude that $u_0$ is the minimum value that the continuous solution to \R{b1} can attain. When $u_0 \equiv 0,$ the conclusion is clear. If $u_0\ge 0,$ then we may define $w = u-u_0,$ where $u$ solves \R{b1}. Then $w$ satisfies the following problem
%\bbb
%&&\partial_t^\alpha w = -(-\Delta)^sw+f(w+u_0), \quad (x,t)\in Q_t,\label{w1}\\
%&&w = 0, \quad (x,t)\in\Gamma_T,\label{w2}\\
%&&w(x,0) = 0, \quad x\in\Omega,\label{w3}.
%\eee
\bb{w1}
\l\{\begin{array}{ll}
\partial_t^\alpha w = -(-\Delta)^sw+f(w+u_0), & (x,t)\in Q_T,\\
w = 0, & (x,t)\in\Gamma_T,\\
w(x,0) = 0, & x\in\Omega.
\end{array}\r.
\ee
%since $u_0\in C(\overline{\Omega}).$ 
The continuous solution to \R{w1} is given by
$$w_t = \int_0^t(t-s)^{\alpha-1}P_\alpha(t-s)f(w_s+u_0)\,ds,$$
which gives $w_t \ge 0$ for all $t\in[0,T].$ This means that $u_t\ge u_0$ for all $t\in[0,T],$ which gives that $u_0$ is the minimum of the solution to \R{b1}. In particular, this means that $f(u_0)\le f(u_t)$ for $t\in[0,T],$ since $f$ is monotonically increasing.
\end{rem}

We now derive conditions under which the continuous solution to \R{b1} is monotonically increasing with respect to time on its interval of existence. It is worth noting that in the case when $\alpha = s = 1,$ it has been shown that a sufficient condition to guarantee monotonically increasing solutions is
\bb{inc1}
\Delta u_0 + f(u_0) > 0.
\ee
Classical proofs of this result, however, have required the differentiability of the reaction function $f(u)$ \cite{Levine,Levine2}. The following theorem develops a monotonicity result that does not require the differentiability of the reaction function $f.$

\begin{thm}
Assume that $-(-\Delta)^su_0+f(u_0)>0.$ Then the solution $u$ to \R{b1} is monotonically increasing with respect to time on its interval of existence. That is, the sequence $\{u_t\}_{t\in[0,T)}$ is a strictly increasing sequence of functions.
\end{thm}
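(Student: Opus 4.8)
My plan is to exploit the variation-of-parameters representation \R{sol1} together with the positivity of the solution operators established in Lemma 5.1, and to reduce the monotonicity of $u$ to the positivity of a related quantity. The natural object to study is the Caputo derivative $\partial_t^\alpha u_t$ itself, or equivalently the formal time-difference $u_{t+h}-u_t$. Since we cannot assume differentiability of $f$, I would avoid differentiating the integral equation directly. Instead, I would set $v_t \mathrel{\mathop:}= u_{t+h} - u_t$ for a small fixed $h>0$ and aim to show $v_t\ge 0$ for all admissible $t$, then let $h\to 0^+$ to conclude that $\{u_t\}$ is increasing. The hypothesis $-(-\Delta)^su_0 + f(u_0) = -Au_0 + f(u_0) > 0$ should enter precisely as the statement that the ``initial velocity'' is positive, which in the fractional setting controls the sign of the solution near $t=0$.

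First I would use the mild-solution formula to write, for the shifted solution, an integral equation and subtract to obtain an expression for $v_t$. Using property \emph{iv.} of Lemma 3.2, namely $\partial_t^\alpha S_\alpha(t)u = -AS_\alpha(t)u$, and property \emph{v.}, $S_\alpha(t) = J_t^{1-\alpha}(t^{\alpha-1}P_\alpha(t))$, I would convert the representation into a form where the hypothesis $-Au_0 + f(u_0)>0$ appears explicitly. The cleanest route is to apply the operator $\partial_t^\alpha$ to \R{sol1}: by Lemma 3.2 \emph{iv.} the first term contributes $-AS_\alpha(t)u_0$, and the convolution term, being $J_t^\alpha$ applied to $f(u_s)$ up to the $P_\alpha$ kernel, should contribute $S_\alpha(t)f(u_0)$ plus a term built from the increments $f(u_s)-f(u_0)$. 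This is the step where I expect the bookkeeping to be delicate, since the Caputo derivative of the convolution $\int_0^t (t-s)^{\alpha-1}P_\alpha(t-s)f(u_s)\,ds$ must be handled through Lemma 2.1 \emph{ii.} and the semigroup property of the Riemann-Liouville integral.

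Once I have an identity of the schematic form $\partial_t^\alpha u_t = S_\alpha(t)\bigl(-Au_0 + f(u_0)\bigr) + (\text{a positivity-preserving integral of } f(u_s)-f(u_0))$, the conclusion follows from three positivity facts already available: $S_\alpha(t)$ is a nonnegative operator (from the representation \R{sbound1} and Lemma 2.3 \emph{i.}), the kernel $P_\alpha(t-s)$ is nonnegative (Lemma 5.1), and $f(u_s)\ge f(u_0)$ because $u_s\ge u_0$ by Remark 5.1 together with the monotonicity of $f$. The hypothesis $-Au_0+f(u_0)>0$ then forces $S_\alpha(t)(-Au_0+f(u_0))>0$, and every remaining term is nonnegative, so $\partial_t^\alpha u_t > 0$. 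Finally I would convert the positivity of the Caputo derivative into genuine monotonicity: if $\partial_t^\alpha u_t\ge 0$ on $[0,T)$, then applying $J_t^\alpha$ and Lemma 2.1 \emph{ii.} gives $u_t - u_0 = J_t^\alpha \partial_t^\alpha u_t \ge 0$, and more generally a standard fractional comparison argument yields $u_t\ge u_\tau$ for $t>\tau$, giving the strict increase of $\{u_t\}_{t\in[0,T)}$.

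The main obstacle I anticipate is justifying the interchange of $\partial_t^\alpha$ with the Duhamel integral and correctly identifying the boundary contribution $S_\alpha(t)f(u_0)$; this requires enough regularity of $s\mapsto f(u_s)$ to apply the fractional calculus identities of Lemma 2.1, and it is exactly here that the local Lipschitz bound \R{lip} and the continuity of the mild solution in $X$ must be invoked to legitimize the formal manipulations without appealing to differentiability of $f$.
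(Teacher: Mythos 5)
Your plan hinges on the schematic identity
$$\partial_t^\alpha u_t = S_\alpha(t)\bigl(-Au_0+f(u_0)\bigr) + \bigl(\text{a positivity-preserving integral of } f(u_s)-f(u_0)\bigr),$$
but the bookkeeping does not produce a sign-definite remainder. Carrying out your own computation: split $f(u_s)=f(u_0)+[f(u_s)-f(u_0)]$ in \R{sol1} and use the identity $\int_0^t (t-s)^{\alpha-1}P_\alpha(t-s)\,ds = A^{-1}\bigl(I-S_\alpha(t)\bigr)$ (which follows by integrating \emph{iii.} of Lemma 3.1). Applying $\partial_t^\alpha$ then gives
$$\partial_t^\alpha u_t = S_\alpha(t)\bigl(-Au_0+f(u_0)\bigr) + \bigl[f(u_t)-f(u_0)\bigr] - A R_t,\qquad R_t \mathrel{\mathop:}= \int_0^t (t-s)^{\alpha-1}P_\alpha(t-s)\bigl[f(u_s)-f(u_0)\bigr]\,ds.$$
The first two terms are indeed positive and nonnegative, but the last term is $-A$ applied to a nonnegative function, and $A=(-\Delta)^s$ does not preserve sign in that direction: $R_t\ge 0$ with $R_t=0$ on $\partial\Omega$ forces $-AR_t\le 0$ at an interior maximum of $R_t$, so this term actively works against you. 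To control it you would need a comparison/maximum principle for the linearized problem $\partial_t^\alpha v = -Av + c(x,t)v$ with $c$ a Lipschitz quotient of $f$ --- exactly the machinery you set out to avoid, and not among the positivity facts (Lemma 5.1, Remark 5.1) you invoke.

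There is a second, independent gap at the end: even if you had $\partial_t^\alpha u_t\ge 0$ on $[0,T)$, this does not yield monotonicity. Applying $J_t^\alpha$ gives only $u_t\ge u_0$, i.e.\ comparison with the initial datum; there is no ``standard fractional comparison argument'' upgrading this to $u_t\ge u_\tau$ for $0<\tau<t$. Because the Caputo derivative at time $t$ averages $u'$ over the whole history $[0,t]$, a function can increase on $[0,1]$ and then decrease slightly on $(1,1+\delta]$ while $\partial_t^\alpha u$ stays strictly positive there for small $\delta$; positivity of the Caputo derivative is strictly weaker than monotonicity. (Your fallback idea of comparing $u_{t+h}$ with $u_t$ has its own unaddressed obstacle: the Caputo derivative's memory is anchored at $t=0$, so $u_{\cdot+h}$ does not solve \R{c1} with initial datum $u_h$; a history term over $[0,h]$ appears as forcing.) The paper avoids both issues by never signing a Caputo derivative: it estimates the difference $u_t-u_s$ for arbitrary $s<t$ directly from \R{sol1}, applies the mean value theorem to $t\mapsto S_\alpha(t)u_0$ together with $S_\alpha'(\xi)=-\xi^{\alpha-1}AP_\alpha(\xi)$, bounds the difference of the Duhamel integrals below using $f(u_w)\ge f(u_0)$, and arrives at $u_t-u_s \ge \xi^{\alpha-1}(t-s)^{\alpha-1}P_\alpha(\xi)\bigl(-Au_0+f(u_0)\bigr)$, whose positivity follows from the positivity of $P_\alpha(\xi)$ alone.
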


\begin{proof}
Let $u_t$ be a continuous solution to \R{c1}, or similarly \R{b1}, given by \R{sol1}. We proceed by considering the difference between $u_t$ and $u_s$ for $t>s\ge 0.$
% and $|t-s|$ sufficiently small. 
First, note that 
\bb{mon1}
S_\alpha(t)u_0 - S_\alpha(s)u_0  =  \l(S_\alpha(s) + S_\alpha '(\xi)(t-s)\r)u_0 - S_\alpha(s)u_0.
\ee
for some $\xi\in[s,t].$ By {\em iii.} of Lemma 3.1, we have $S'_\alpha(t)u_0 = -t^{\alpha-1}AP_\alpha(t)u_0.$ Hence, by \R{mon1} we have
\bb{mon2}
S_\alpha(t)u_0 - S_\alpha(s)u_0 = -\xi^{\alpha-1}AP_\alpha(\xi)(t-s)^{\alpha-1}u_0 = -\xi^{\alpha-1}(t-s)^{\alpha-1}P_\alpha(\xi)Au_0,
\ee
where the fact that $A$ and $P_\alpha(t)$ commute for all $t\in[0,T]$ follows by the definition of $P_\alpha.$

We now consider the difference of the integrals in the continuous solutions. By noting that $f(u_t) \ge f(u_0)$ for all $t\ge 0$ (see Remark 5.1), we have
\bbb
&&\int_0^t(t-w)^{\alpha-1}P_\alpha(t-w)f(u_w)\,dw - \int_0^s(s-w)^{\alpha-1}P_\alpha(s-w)f(u_w)\,dw\nnn\\
%&&~~~~~~~~~~~~~~~~~~~~ \ge \l(\int_0^t(t-w)^{\alpha-1}P_\alpha(t-w)\,dw - \int_0^s(s-w)^{\alpha-1}P_\alpha(s-w)\,dw\r)f(u_0)\nnn\\
&&~~~~~~~~~~~~~~~~~~~~ \ge \l(\int_0^t A^{-1}S_\alpha '(t-w)\,dw - \int_0^s A^{-1}S_\alpha(s-w)\,dw\r)f(u_0)\nnn\\
%&&~~~~~~~~~~~~~~~~~~~~ = A^{-1}f(u_0)\l(S_\alpha(t) - S_\alpha(s)\r)\nnn\\
&&~~~~~~~~~~~~~~~~~~~~ = \xi^{\alpha-1}(t-s)^{\alpha-1}P_\alpha(\xi)f(u_0),\label{mon3}
\eee
where the above integration is well defined since $A^{-1}$ is compact.

Thus, combining \R{mon1} and \R{mon3} we have
\bb{mon4}
u_t - u_s \ge \xi^{\alpha-1}(t-s)^{\alpha-1}P_\alpha(\xi)\l(-Au_0 + f(u_0)\r).
\ee
Monotonicity follows from the assumption that $-Au_0 + f(u_0) = -(-\Delta)^su_0+f(u_0)>0.$
\end{proof}

\begin{rem}
We note that the restriction $-(-\Delta)^su_0+f(u_0)$ is not unreasonable. This is clear from the fact that letting $u_0\equiv 0$ results in $-(-\Delta)^su_0+f(u_0) = f(0)>0.$
\end{rem}

\section{Finite Time Quenching}

In this section we show that the local in time solution to \R{b1} cannot always be extended to a global in time solution. This fact is well established in the cases for $\alpha = s = 1$ and also recently for $s = 1$ and $\Omega = [0,1]\subset\RR$ \cite{Kawa,Kavallaris2008,Levine,Levine2,Sawangtong2017}. We now show that this result holds in a similar manner for any $\alpha,s\in(0,1).$ 
The key is demonstrating a relationship between the existence of solutions to \R{b1} and weak solutions to the following stationary problem
%\bbb
%&&(-\Delta)^s v = f(v),\quad x\in\Omega,\label{ss1}\\
%&&v = 0,\quad x\in\partial\Omega,\label{ss2}\\
%&&v>u_0,\quad x\in\Omega.\label{ss3}
%\eee
\bb{ss1}
\l\{\begin{array}{ll}
(-\Delta)^sv = f(v), & x\in\Omega,\\
v = 0, & x\in\partial\Omega,
\end{array}\r.
\ee
such that $v\ge u_0.$
A function $v$ is called a {\em weak solution} to \R{ss1} if
\bb{weak}
v(x) = \int_\Omega G_{\Omega,s}(x,y)f(v(y))\,dy\quad\mbox{and}\quad v(x)\ge u_0(x),
\ee
for $x-$a.e. in $\Omega,$
where $G_{\Omega,s}$ is the Green's function associated to the spectral fractional Laplacian with zero Dirichlet conditions on $\Omega$ \cite{chen2010heat,felmer2014radial}.

%We commence by letting $\Omega$ be the open ball of radius one centered at the origin, $B_1(0),$ and letting $u_0\equiv 0.$

Let $B\subset \Omega$ be the largest open ball contained in $\Omega.$ Without loss of generality, we further assume that $B$ is centered at the origin. We begin by establishing results for \R{b1} and \R{ss1} on $B.$ We will proceed in a manner similar to \cite{Levine2}.

\begin{lemma}
%Let $\Omega = B_1(0)$ and $u_0\equiv 0.$ Then a global 
Let $B$ be as above and consider \R{b1} and \R{ss1} on $B.$
A solution to \R{b1} exists globally if and only if there exists a weak stationary solution to \R{ss1}. Moreover, in this case the solution to \R{b1} approaches the minimum solution to \R{ss1} monotonically from below as $t\to \infty.$
\end{lemma}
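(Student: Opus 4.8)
The plan is to reduce both implications to a comparison principle relating the mild solution \R{sol1} to stationary solutions of \R{ss1}, followed by a limit as $t\to\infty.$ The first step is to observe that any weak stationary solution $v$ of \R{ss1} (so that $Av=f(v)$ with $A=(-\Delta)^s$) satisfies the abstract fixed-point identity
$$v = S_\alpha(t)v + \int_0^t (t-w)^{\alpha-1}P_\alpha(t-w)f(v)\,dw, \qquad t\in[0,T].$$
Indeed, writing $f(v)=Av,$ using that $A$ and $P_\alpha$ commute, substituting $\tau=t-w,$ and invoking item {\em iii.} of Lemma 3.1 in the form $\tau^{\alpha-1}AP_\alpha(\tau)=-S_\alpha'(\tau),$ the integral telescopes to $(I-S_\alpha(t))v;$ since $S_\alpha(0)=I,$ the identity follows. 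Thus $v$ is a time-independent mild solution of \R{c1}.

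Next I would prove a comparison principle: if $u_0\le v$ pointwise, then the mild solution satisfies $u_t\le v$ on its interval of existence. I would argue through the Picard iteration that produces the fixed point of Theorem 4.1, setting $u^{(0)}_t=S_\alpha(t)u_0$ and $u^{(n+1)}_t=S_\alpha(t)u_0+\int_0^t(t-w)^{\alpha-1}P_\alpha(t-w)f(u^{(n)}_w)\,dw.$ The positivity of $S_\alpha(t)$ and $P_\alpha(t)$ (established in the proof of Lemma 5.1), together with the fixed-point identity for $v,$ gives $u^{(0)}_t\le S_\alpha(t)v\le v;$ the monotonicity of $f$ then propagates the implication $u^{(n)}_t\le v\Rightarrow u^{(n+1)}_t\le v,$ and passing to the limit yields $u_t\le v.$ With this, the implication $(\Leftarrow)$ is immediate: a weak stationary solution necessarily has $\|v\|_\infty<c$ (else $f(v)$ would be infinite), so $u_t\le v<c$ forces $\|u_t\|<c$ for all $t,$ and by Remark 4.1 the solution extends globally.

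For $(\Rightarrow)$ I would combine global existence with Theorem 5.1: the solution is monotonically increasing in $t$ and bounded by $c,$ hence converges to a limit $u_\infty.$ To identify $u_\infty$ as a weak solution of \R{ss1} I would pass to the limit in \R{sol1}. Two facts drive this. First, $S_\alpha(t)u_0\to 0,$ since by \R{ml1} each eigencomponent satisfies $E_{\alpha,1}(-\lambda_n^s t^\alpha)=\OO(t^{-\alpha})$ as $t\to\infty.$ Second, the telescoping identity above gives $\int_0^t\tau^{\alpha-1}P_\alpha(\tau)g\,d\tau=A^{-1}(I-S_\alpha(t))g\to A^{-1}g=\int_\Omega G_{\Omega,s}(\cdot,y)g(y)\,dy.$ To handle the non-constant integrand I would substitute $\tau=t-w$ and apply monotone convergence: for each fixed $\tau,$ $f(u_{t-\tau})$ increases to $f(u_\infty)$ (monotonicity of $u_t$ and continuity of $f$), and the positive kernel $\tau^{\alpha-1}P_\alpha(\tau)$ permits passing the limit inside, yielding $u_\infty=A^{-1}f(u_\infty),$ i.e. $u_\infty$ is a weak solution of \R{ss1}, with $u_\infty\ge u_0$ by Remark 5.1.

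Finally, minimality and the approach from below are read off from what precedes: $u_t\nearrow u_\infty$ is precisely monotone approach from below, and for any weak stationary solution $v\ge u_0$ the comparison principle gives $u_t\le v,$ hence $u_\infty\le v,$ so $u_\infty$ is the minimal stationary solution. I expect the main obstacle to be the limit in the forward direction: one must justify $S_\alpha(t)\to 0$ at the boundary argument $|\mbox{arg}\,z|=\pi$ of the asymptotics \R{ml1}, and---more delicately---ensure that genuine global existence yields $\|u_\infty\|_\infty<c$ so that $f(u_\infty)$ is finite and $u_\infty$ is a legitimate stationary solution rather than an infinite-time quenching profile. The monotone structure of $f(u_t)$ and the positivity of the kernel are what render the interchange of limit and integral (in the $\mathbb{H}^s(\Omega)$ norm) admissible.
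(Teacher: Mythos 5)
Your proposal is correct in outline but takes a genuinely different route from the paper's own proof. For the direction ``stationary solution $\Rightarrow$ global existence,'' the paper works radially on the ball $B$: after reducing to symmetric, radially decreasing data, it sets $w=v-u$, writes the time-fractional inequality \R{ww1} with a bounded coefficient $d$ coming from convexity of $f$, and reaches a contradiction at the putative quenching time through maximum-point arguments. Your replacement --- showing that a stationary $v$ is a time-independent fixed point of the Duhamel map via the telescoping identity $\int_0^t\tau^{\alpha-1}AP_\alpha(\tau)\,d\tau=I-S_\alpha(t)$ (item {\em iii.} of Lemma 3.1), then running a Picard-iteration comparison principle from positivity of $S_\alpha,P_\alpha$ and monotonicity of $f$ --- is more elementary, needs no radial symmetry (so it works on arbitrary domains, not only balls), and avoids the paper's unsubstantiated steps such as ``$\partial_t^\alpha w<0$ implies $\partial_t w<0$'' and ``$w$ is maximized at $x=0$.'' For the converse, the paper forms $Z(x,t)=\int_B G_{B,s}(x,y)u(y,t)\,dy$, computes $\partial_t^\alpha Z$ by fractional integration by parts, and shows the limiting functional $Y$ must vanish identically (since $Y>0$ would force $Z\to\infty$ and finite-time attainment of $c$); you instead pass to the limit directly in \R{sol1} using Mittag-Leffler decay of $S_\alpha(t)u_0$ and monotone convergence in the convolution term. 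The two converse arguments are close in spirit --- both hinge on time-monotonicity of $u$ and identify $u_\infty=A^{-1}f(u_\infty)$ --- but your route buys the minimality statement for free: the comparison principle gives $u_t\le v$ for every weak stationary $v\ge u_0$, hence $u_\infty\le v$, a claim the paper's proof asserts without argument. Both proofs share the same glossed-over delicacies, which you flag honestly: the time-monotonicity requires the hypothesis of Theorem 5.1 on $u_0$ (the paper uses it implicitly as well); the decay $E_{\alpha,1}(-\lambda_n^st^\alpha)\to 0$ must be justified at $|\arg z|=\pi$, which lies on the boundary of the regime in \R{ml1}; and one must rule out that the monotone limit is an infinite-time quenching profile, so that $f(u_\infty)$ is integrable --- note that your parenthetical ``else $f(v)$ would be infinite'' is not quite a proof that $\|v\|_\infty<c$, since a weak solution could in principle touch $c$ on a null set while keeping the Green's integral finite.
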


\begin{proof}
Suppose that there is a solution $v$ of \R{ss1}. 
We begin by assuming that $u_0(x)$ is symmetric and monotonically decreasing about the origin in $B.$
Thus, it follows that the solution $u$ of \R{b1} and $v$ of \R{ss1} are both radially symmetric and monotone decreasing about the origin \cite{felmer2014radial}. Then there is a maximum of both $u$ and $v$ occurring at $x = 0.$ Let $w\mathrel{\mathop:}= v-u$ and consider
%\bbb
%&&\partial_t^\alpha w \ge -(-\Delta)^s w + d(x,t)w,\quad (x,t)\in B\times (0,T),\label{ww1}\\
%&&w = 0,\quad (x,t)\in \partial B\times (0,T),\label{ww2}\\
%&&w(x,0) = v(x)-u_0(x),\quad x\in B,\label{ww3}\\
%&&w(0,t)\ge 0,\quad t\in[0,T],\label{ww4}
%\eee
\bb{ww1}
\l\{\begin{array}{ll}
\partial_t^\alpha w \ge -(-\Delta)^s w + d(x,t)w, & (x,t)\in B\times (0,T),\\
w = 0, & (x,t)\in\partial B\times (0,T),\\
w(x,0) = v(x) - u_0(x), & x\in B,\\
w(0,t) \ge 0, & t\in [0,T],
\end{array}\r.
\ee
where $d$ is a bounded function resulting from the convexity of the function $f.$
By \R{sol1} and Lemma 5.1 we can conclude that $w\ge 0$ for $x\in\Omega\backslash\{0\}$ and $t\in [0,T).$

We will show that $u$ must exist globally if $v$ exists. Note that $\partial_t^\alpha w = -\partial_t^\alpha u < 0,$ which implies that $\partial_t w < 0.$ It can be shown that since $u$ and $v$ both have a maximum at $x=0,$ then $w$ also is maximized at $x=0.$ For the sake of a contradiction, assume that there is a $T<\infty$ such that the maximal interval of existence of $u$ is $0\le t<T.$ By Theorem 4.1, this implies that $\textstyle\lim_{t\to T^-}u(x,t) = c.$ Moreover, since $\partial_tw<0$ and $w\ge 0,$ the fact that $w$ is maximized at $x=0$ gives $\textstyle\lim_{t\to T^-}u(x,t) = v(x)$ uniformly on $\Omega.$ Consider $\tilde{B}\mathrel{\mathop:}= B\backslash\{0\}\times [0,T].$ It follows that $w\ge 0$ on $\tilde{B}$ and satisfies \R{ww1} with $w(x,T) = 0.$ By considering \R{sol1} and Theorem 4.1 again, we conclude that $w(x,t)>0$ for $(x,t)\in\tilde{B}.$ This is a contradiction, and thus, $T=\infty$ and $u$ is a global solution to \R{b1} on $B.$

Now let $u_0$ be arbitrary and nonnegative. We can then choose a function $\tilde{u}_0\in\mathbb{H}^s(\Omega)$ such that $\tilde{u}_0$ is symmetric and radially decreasing in $B,$  $0\le\tilde{u}_0\le u_0$ in $B,$ and $\tilde{u}_0\equiv 0$ on $\partial B.$ A modification of the above arguments by considering \R{b1} and \R{ss1} with $\tilde{u}_0$ gives a similar conclusion regarding the existence of a global solution to \R{b1} if a solution to \R{ss1} exists. 
%In fact, whether or not \R{ss1} exists will not depend on the choice of $u_0$ and $\tilde{u}_0$ \cite{CABRE20102052}.

Now assume that $u$ is a global solution to \R{b1}. Let 
$$Z(x,t) \mathrel{\mathop:}=\int_{\Omega} G_{B,s}(x,y)u(y,t)\,dy,$$ 
where $G_{B,s}$ is the Green's function associated to \R{ss1}. %That is,
%$$G_{B,s}(x,y) = c_{d,s}|x-y|^{2s-d}\int_0^{r_0(x,y)}\frac{r^{s-1}}{(r+1)^{d/2}}\,dr,$$
%where
%$$r_0(x,y) = \frac{\l(1-|x|^2\r)\l(1-|y|^2\r)}{|x-y|^2}$$
%and $c_{d,s}$ is a defined in Definition 2.4 [CITE]. 
Then we have
\bbb
\partial_t^\alpha Z(x,t) & = & \int_{B}G_{B,s}(x,y)\partial_t^\alpha u(y,t)\,dy\label{green2}\\
& = & \int_{B}G_{B,s}\l[-(-\Delta)^su(y,t) + f(u(y,t))\r]\,dy\nnn\\
& = & -u(x,t) + \int_{B}G_{B,s}(x,y)f(u(y,t))\,dy,\label{green1}
\eee
where \R{green1} is valid for $(x,t)\in B\times (0,T)$ as long as $u(x,t)<c,$ and the integration by parts formula for the fractional Laplacian has been applied to obtain \R{green1}. Since $f$ and $\partial_t^\alpha u$ are both monotonically increasing, it follows that the expression in \R{green1} converges and can be expressed as
$$Y(x) \mathrel{\mathop:}= -v(x) + \int_\Omega G_{\Omega,s}(x,y)f(v(y))\,dy,$$
where $v(x) = \textstyle\lim_{t\to\infty}u(x,t)\le c.$ From \R{green2} we have that $Y\ge 0.$ Note that if $Y>0,$ it would follow that $Z\to \infty$ as $t\to\infty,$ and hence, $u$ would reach $c$ in finite time. Since this would be a contradiction to our assumption that $u$ exists globally, we have that $Y(x)\equiv 0.$ Thus, after rearranging the expression for $Y$ we have
$$v(x) = \int_\Omega G_{\Omega,s}(x,y)f(v(y))\,dy,$$
and $v$ is a weak solution to \R{ss1}. This conclusion gives the desired result.
\end{proof}

Lemma 6.1 only gives an equivalence between global solutions and stationary solutions on open balls. The following result will aid in extending this result to arbitrary domains.

\begin{lemma}
Let $\Omega\subset\RR^d$ be a bounded convex domain with smooth boundary $\partial\Omega$ and let $B$ be an open ball such that $B\subset \Omega.$ Let $u_1$ and $u_2$ be solutions of \R{b1} on $B\times(0,T_1)$ and $\Omega\times(0,T_2),$ respectively. Further, let $T_0\mathrel{\mathop:}=\min\{T_1,T_2\}$ and $u_0\equiv 0.$ Then $u_1<u_2$ for $(x,t)\in B\times (0,T_0).$
\end{lemma}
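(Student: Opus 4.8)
The plan is to reduce the comparison to a domain-monotonicity property of the kernels generating the solution operators on the two domains, and then to run a positivity-preserving (monotone) iteration. Write $A_B$ and $A_\Omega$ for the operator $(-\Delta)^s$ on $B$ and $\Omega,$ respectively, and let $T_B(t),P_\alpha^B(t)$ and $T_\Omega(t),P_\alpha^\Omega(t)$ be the associated solution operators from Section 3. Since $u_0\equiv 0,$ the mild solutions \R{sol1} reduce to
$$u_i(\cdot,t) = \int_0^t (t-\tau)^{\alpha-1}P_\alpha^{D_i}(t-\tau)f\l(u_i(\cdot,\tau)\r)\,d\tau,\qquad i=1,2,$$
with $D_1 = B$ and $D_2 = \Omega.$ By \R{pbound1} each $P_\alpha^D(t)$ is an integral operator, $P_\alpha^D(t)g(x) = \int_D k_\alpha^D(t,x,y)g(y)\,dy,$ with nonnegative kernel
$$k_\alpha^D(t,x,y) = \int_0^\infty \alpha\sigma\,\Psi_\alpha(\sigma)\,p_D^s(\sigma t^\alpha,x,y)\,d\sigma,$$
where $p_D^s$ is the heat kernel of $T_D.$ The entire argument rests on the facts that, for $x,y\in B$ and $t>0,$ one has $0\le k_\alpha^B(t,x,y)\le k_\alpha^\Omega(t,x,y),$ and that $k_\alpha^\Omega(t,x,y)>0$ for all $x,y\in\Omega,\ t>0.$

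To establish these I would pass the monotonicity down from the classical Dirichlet heat kernel by Bochner subordination. Writing $p_D^{(1)}$ for the classical Dirichlet heat kernel on $D,$ subordination gives $p_D^s(t,x,y) = \int_0^\infty p_D^{(1)}(r,x,y)\,\eta_t^s(r)\,dr$ with a nonnegative $s$-stable subordinator density $\eta_t^s.$ The classical heat kernel is domain monotone, $0\le p_B^{(1)}(r,x,y)\le p_\Omega^{(1)}(r,x,y)$ for $x,y\in B,$ by the parabolic maximum principle (equivalently, killed Brownian motion is less likely to be absorbed in the larger domain), and $p_\Omega^{(1)}(r,x,y)>0$ throughout the connected set $\Omega$ (here the convexity of $\Omega$ guarantees connectedness). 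Integrating these against $\eta_t^s\ge 0$ and then against $\alpha\sigma\Psi_\alpha(\sigma)\ge 0$ (Lemma 2.2 {\em i.}) yields the required kernel inequalities; nonnegativity of $k_\alpha^B$ was already used in Lemma 5.1.

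With the kernel estimates in hand I would compare the Picard iterates furnished by Theorem 4.1. Set $u_i^{(0)}\equiv 0$ and $u_i^{(n+1)}(\cdot,t) = \int_0^t (t-\tau)^{\alpha-1}P_\alpha^{D_i}(t-\tau)f(u_i^{(n)}(\cdot,\tau))\,d\tau,$ so that $u_i^{(n)}\to u_i$ in $X.$ I claim $u_1^{(n)}\le u_2^{(n)}$ on $B$ for every $n,$ by induction: the base case is an equality, and for the step one extends $f(u_1^{(n)})$ by zero off $B.$ Since $f\ge 0$ and $u_2^{(n)}\ge 0$ (Lemma 5.1), this extension is dominated by $f(u_2^{(n)})$ on all of $\Omega,$ while the inductive hypothesis together with the monotonicity of $f$ gives $f(u_1^{(n)})\le f(u_2^{(n)})$ on $B.$ Combining the nonnegativity of $k_\alpha^\Omega$ with the kernel inequality then yields $P_\alpha^B(t-\tau)f(u_1^{(n)})(x)\le P_\alpha^\Omega(t-\tau)f(u_2^{(n)})(x)$ for $x\in B,$ and integrating in $\tau$ propagates the bound to the $(n+1)$st iterate. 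Passing to the limit (a.e., then everywhere by continuity of the solutions) gives the nonstrict inequality $u_1\le u_2$ on $B\times(0,T_0).$

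Finally, to upgrade to strict inequality I would decompose, for $x\in B$ (suppressing the arguments $k_\alpha^D=k_\alpha^D(t-\tau,x,y)$ and $f(u_i)=f(u_i(y,\tau))$),
\bbbb
u_2(x,t)-u_1(x,t) &=& \int_0^t (t-\tau)^{\alpha-1}\bigg[\int_B k_\alpha^\Omega\,\l(f(u_2)-f(u_1)\r)\,dy\\
&&\qquad + \int_B \l(k_\alpha^\Omega-k_\alpha^B\r)f(u_1)\,dy + \int_{\Omega\backslash B} k_\alpha^\Omega\,f(u_2)\,dy\bigg]\,d\tau.
\eeee
The first two integrands are nonnegative by the previous step, while the third is \emph{strictly} positive: on $\Omega\backslash B$ one has $f(u_2)\ge f(0)>0$ (positivity of $u_2,$ monotonicity of $f,$ and $f(0)>0$), and $k_\alpha^\Omega(t-\tau,x,y)>0$ for $x\in B,\ y\in\Omega\backslash B.$ Hence $u_2-u_1>0$ for every $(x,t)\in B\times(0,T_0),$ as claimed. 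The main obstacle is precisely the domain-monotonicity and strict positivity of the kernels $k_\alpha^D$; everything else is a positivity-preserving iteration resting on Lemma 5.1. The delicacy is that $P_\alpha^B$ and $P_\alpha^\Omega$ are built from \emph{different} spectral decompositions, so the comparison cannot be phrased as an operator inequality on a single space and must instead be carried out at the level of kernels with functions extended by zero.
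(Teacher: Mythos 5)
Your proof is correct, but it takes a genuinely different route from the paper's. The paper argues in comparison-principle style: it sets $w \mathrel{\mathop:}= u_2 - u_1$, asserts that $w$ satisfies the linearized fractional differential inequality \R{ball1} on $B\times(0,T_0)$ with a bounded positive potential $d(x,t)$ extracted from the convexity/Lipschitz property of $f$, supplements this with $w(x,0)=0$ and $w>0$ on $\partial B\times(0,T_0)$ (since $u_2>0$ in the interior of $\Omega$ by Lemma 5.1 while $u_1$ vanishes on $\partial B$), and then invokes the mild-solution representation \R{sol1} and positivity of the solution operators to conclude first $w\ge 0$ and then $w>0$. You instead work at the level of kernels: Bochner subordination transfers domain monotonicity from the classical Dirichlet heat kernels to the spectral fractional ones, a monotone Picard iteration with extension by zero off $B$ yields $u_1\le u_2$, and your three-term decomposition isolates the strictly positive contribution of $f(u_2)$ on $\Omega\setminus B$ transported into $B$ by the strictly positive kernel $k_\alpha^\Omega$. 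What your route buys is rigor at exactly the point the paper glosses over: the spectral operators on $B$ and on $\Omega$ are built from different eigenbases, so the restriction of $u_2$ to $B$ does not solve an equation driven by the fractional Laplacian of $B$, and the single inequality \R{ball1} for $w$ is not literally well formed for the spectral definition; your kernel-level comparison never needs to write such an inequality, and your closing remark that the comparison cannot be phrased as an operator inequality on a single space pinpoints precisely the gap your argument repairs. What the paper's route buys is brevity and a maximum-principle format that it reuses elsewhere (the same device, with the same ambiguity, appears in the proof of Lemma 6.1), whereas your argument leans on $u_0\equiv 0$ to reduce both solutions to their Duhamel terms --- which is exactly the hypothesis the lemma provides.
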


\begin{proof}
Without loss of generality, assume $B\subset\Omega$ is an open ball centered at the origin. Define $w\mathrel{\mathop:}= u_2 - u_1.$ Then we have
%\bbb
%&&\partial_t^\alpha w \ge -(-\Delta)^s w + d(x,t)w,\quad (x,t)\in B\times(0,T_0),\label{ball1}\\
%&&w > 0,\quad (x,t)\in\partial B\times(0,T_0)\label{ball2}\\
%&&w(x,0) = 0,\quad x\in B,\label{ball3}
%\eee
\bb{ball1}
\l\{\begin{array}{ll}
\partial_t^\alpha w \ge -(-\Delta)^s w + d(x,t)w, & (x,t)\in B\times(0,T_0),\\
w>0, & (x,t)\in\partial B\times (0,T_0),\\
w(x,0) = 0, & x\in B,
\end{array}\r.
\ee
where $d$ is bounded and positive in $B.$ By \R{sol1} it follows that $w\ge 0.$ By \R{ball1} it follows that $w>0,$ and thus, $u_2>u_1$ for $(x,t)\in B\times(0,T_0).$ 
\end{proof}

It follows that a solution to \R{b1} can only {\em quench} if the domain is ``large" enough. This result is summarized by the following theorem.

\begin{thm}
If there exists an open ball such that $B\subset \Omega$ and \R{ss1} does not exist on $B,$ then the solution to \R{b1} quenches.
\end{thm}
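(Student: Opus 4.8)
The plan is to combine the comparison principle of Lemma 6.2 with the equivalence established in Lemma 6.1. The hypothesis hands us an open ball $B\subset\Omega$ on which the stationary problem \R{ss1} admits no weak solution. The strategy is to argue by contradiction: suppose the solution $u_2$ to \R{b1} on $\Omega$ exists globally, i.e.\ does not quench, and show that this would force the existence of a global solution on the smaller domain $B$, which by Lemma 6.1 would in turn produce a weak stationary solution on $B$, contradicting the hypothesis.

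First I would set $u_0\equiv 0$, so that the hypotheses of Lemma 6.2 are met; the remark following Definition 4.1 (together with Remark 6.3, since $-(-\Delta)^s u_0 + f(u_0) = f(0) > 0$) guarantees that the relevant solutions are well-defined, positive, and monotone. Let $u_1$ denote the solution to \R{b1} on $B$ with maximal interval $[0,T_1)$ and $u_2$ the solution on $\Omega$ with maximal interval $[0,T_2)$. Assuming for contradiction that $u_2$ is global, we have $T_2=\infty$, so that $T_0=\min\{T_1,T_2\}=T_1$. Lemma 6.2 then yields $u_1 < u_2$ pointwise on $B\times(0,T_1)$.

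Next I would use this ordering to prove that $u_1$ is itself global, that is $T_1=\infty$. Since $u_2$ exists for all time and never quenches, $\|u_2\|_\infty$ stays bounded away from $c$ on every compact time interval; because $u_1<u_2$ on $B$ and $u_1\ge u_0\equiv 0$ by positivity (Lemma 5.1), the solution $u_1$ is trapped between $0$ and the globally bounded $u_2$ on $B$. By Remark 4.1 a solution ceases to exist only when its norm reaches $c$, so $u_1$ cannot quench at any finite $T_1$; hence $T_1=\infty$ and $u_1$ is a global solution to \R{b1} on $B$. Applying the forward implication of Lemma 6.1 to the ball $B$ now produces a weak stationary solution to \R{ss1} on $B$, directly contradicting the hypothesis that no such solution exists. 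Therefore $u_2$ cannot be global, i.e.\ the solution to \R{b1} quenches.

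The main obstacle I anticipate is the step controlling $u_1$ by $u_2$ near the quenching threshold: one must ensure that the strict inequality $u_1<u_2$ from Lemma 6.2 actually prevents $\|u_1\|_\infty$ from reaching $c$ while $\|u_2\|_\infty$ stays below $c$. This requires knowing the comparison is uniform enough in $x$ (so that the supremum of $u_1$ is genuinely dominated by that of $u_2$, both maxima being attained at the center by the radial monotonicity from \cite{felmer2014radial}), rather than merely pointwise in a way that could degenerate. One must also be slightly careful that Lemma 6.2 is stated with a fixed $T_0$ built from the \emph{a priori} maximal times; the clean way around this is to run the comparison on each interval $[0,T_1-\varepsilon]$ and let $\varepsilon\to 0^+$, which is legitimate precisely because $u_2$ is assumed global.
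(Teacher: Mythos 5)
Your proposal is correct and takes essentially the same route as the paper: the paper's own proof consists only of the remark that the result ``follows immediately from Lemmas 6.1 and 6.2,'' and your contradiction argument (a global solution on $\Omega$ forces, via the comparison in Lemma 6.2, a global solution on the ball $B$, which by Lemma 6.1 produces a weak stationary solution on $B$, contradicting the hypothesis) is precisely the intended combination of those two lemmas with the omitted details supplied. Your choice of $u_0\equiv 0$ to meet the hypotheses of Lemma 6.2, and the continuity-on-compact-intervals argument keeping $\|u_1\|_\infty$ away from $c$, are both consistent with the paper's framework.
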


\begin{proof}
Without loss of generality, assume that $B\subset\Omega$ is an open ball containing the origin. The result follows immediately from Lemmas 6.1 and 6.2.
\end{proof}

\section{Conclusions and Future Work}

This article has studied a time-space fractional semilinear equation which is the generalization of the standard Kawarada equation. Herein, the fractional Caputo derivative is used in time, while the fractional Laplacian is considered in space. It is shown that the standard properties exhibited by the local Kawarada equation may be extended to the nonlocal equation of interest. In particular, under appropriate restrictions, the solution to the fractional Kawarada equation is positive and monotonically increasing on its domain of existence. Moreover, it is shown that there are conditions under which the solution will {\em quench} in finite time. These conditions depend on the domain size and shape, as well as the nonlinear reaction term.

The current study has only considered the theoretical aspects of this problem, but it is well known that the numerical and computational aspects of fractional problems yield even more difficult hurdles. While these issues are not considered herein, they are of interest and will be the topic of forthcoming works. It is known that the nonlocal nature of the problem and quenching phenomenon must be treated carefully while also avoiding unnecessary memory issues during computations. Moreover, it will be of interest whether splitting methods, such as the Alternating Direction Implicit Method, can be employed to improve efficiency and accuracy in such nonlocal problems. This direction of research is still in its infancy and will likely provide interesting mathematical problems for years to come.

\section*{References}

\bibliographystyle{plain}
\bibliography{Frac_Bib1}

%% else use the following coding to input the bibitems directly in the
%% TeX file.

%\begin{thebibliography}{00}

%% \bibitem[Author(year)]{label}
%% Text of bibliographic item

%\bibitem[ ()]{}

%\end{thebibliography}

\end{document}